\documentclass[a4paper,12pt]{article}

\usepackage[english]{babel}

\usepackage[top=3cm,bottom=3cm,left=2.5cm,right=2.5cm]{geometry}

\usepackage{amsmath}
\usepackage{graphicx}
\usepackage{amsmath,amsthm,mathrsfs}
\usepackage{bm}
\usepackage{amssymb}
\usepackage{cases}
\usepackage{hyperref}
\usepackage{arydshln}
\usepackage[english]{babel}
\usepackage{amsmath,amsthm}
\usepackage{amsfonts}
\usepackage{marvosym}
\usepackage{latexsym}
\usepackage{graphicx}
\usepackage{txfonts}
\usepackage[numbers,sort&compress]{natbib}
\usepackage[natural]{xcolor}
\usepackage{rotating}
\usepackage{mathtools}
\usepackage{enumerate}
\allowdisplaybreaks [4]

\newtheorem{theorem}{\bf Theorem}[section]
\newtheorem{lemma}[theorem]{Lemma}
\newtheorem{problem}[theorem]{Problem}

\newtheorem{claim}[theorem]{\indent Claim}

\begin{document}
\title{Max-Bisections of graphs without  even cycles\thanks{Research was supported by National Key R\&D Program of China (Grant No. 2023YFA1010202), National Natural Science Foundation of China (Grant No. 12371342), the Central Guidance on Local Science and Technology Development Fund of Fujian Province (Grant No. 2023L3003)}}
\author{Jianfeng Hou\footnote{Email: jfhou@fzu.edu.cn}, ~Siwei Lin\footnote{Email: linsw0710@163.com}, ~Qinghou Zeng\footnote{Email: zengqh@fzu.edu.cn}\\
{\small Center for Discrete Mathematics, Fuzhou University, Fujian, 350003, China}}
\date{}

\maketitle

\begin{abstract}
For an integer $k\ge 2$, let $G$ be a graph with $m$ edges and without cycles of length $2k$. The pivotal Alon-Krivelevich-Sudakov Theorem on Max-Cuts states that $G$ has a bipartite subgraph with at least $m/2+\Omega(m^{(2k+1)/(2k+2)})$ edges. In this paper, we present a bisection variant of it by showing that if $G$ has minimum degree at least $k$, then $G$ has a balanced bipartite subgraph with at least $m/2+\Omega(m^{(2k+1)/(2k+2)})$ edges. It not only answers a problem of Fan, Hou and Yu in full generality but also enhances a recent result given by Hou, Wu and Zhong.

Our approach hinges on a key bound for bisections of graphs with sparse neighborhoods concerning the degree sequence. The result is inspired by the celebrated approximation algorithm of Goemans and Williamson and appears to be worthy of future exploration.
\end{abstract}

{\bf Keywords:} Max-Cut, Max-Bisection, even cycle, minimum degree

\section{Introduction}
\subsection{Max-Cuts}
Let $G$ be a graph. A \emph{bipartition} (or a \emph{cut}) of $G$, denoted by $(A,B)$,  is a partition of $V(G)$ with $V(G)=A\cup B$ and $A\cap B=\emptyset$. If it satisfies $||A|-|B||\leq 1$, then we call it a \emph{bisection}. The $size$ of $(A,B)$, denoted by $e(A,B)$, is the number of edges with one end in $A$ and the other in $B$. The famous \emph{Max-Cut problem} is to find a cut $(A,B)$ of $G$ that maximizes $e(A,B)$.

Considering a random cut of a graph $G$ with $m$ edges, it is easy to see that $G$ has a cut of size at least $m/2$.
In 1967, Erd\H os \cite{Erd1967} demonstrated that the factor 1/2 cannot be improved.  Answering a question of Erd\H{o}s, Edwards \cite{Ed1973,Ed1975} proved that every graph with $m$ edges has a cut of size at least $m/2+(\sqrt{8m+1}-1)/8$, and this bound is tight for all complete graphs of odd order. For certain ranges of $m$, Alon \cite{A1996} gave an additive improvement of order $m^{1/4}$.

A natural next step is to explore Max-Cuts in graphs that are free of specific substructures. For a family $\mathscr{H}$ of graphs, we say a graph $G$ is $\mathscr{H}$-\emph{free} if $G$ contains no member of $\mathscr{H}$. If $\mathscr{H}=\{H\}$, then we write $H$-free rather than $\{H\}$-free for brevity. Denote by $C_\ell$ a cycle with $\ell$ vertices.  Erd\H{o}s and Lov\'{a}sz (see \cite{Erd1979}) initialed the topic by showing that every triangle-free graph with $m$ edges has a cut of size at least $m/2+\Omega(m^{2/3})$. Following earlier works (\cite{PT1994,She1992}), Alon \cite{A1996} later strengthened the bound to $m/2+\Theta(m^{4/5})$. In 2003, Alon, Bollob\'as, Krivelevich and Sudakov \cite{Alo2003} conducted a systematic investigation of Max-Cuts of graphs without short cycles and showed that every graph with $m$ edges and girth $r$ (the length of the shortest cycle) has a cut of size at least $m/2+\Omega(m^{r/(r+1)})$. Even when triangles are permitted, Zeng and Hou \cite{Zen2018} established that the same bound holds. Remarkably, the additional restriction forbidding
$C_3,\ldots, C_{r-1}$ turns out to be superfluous. The pivotal Alon-Krivelevich-Sudakov Theorem \cite{AKS2005} states that
\begin{theorem}[Alon, Krivelevich and Sudakov \cite{AKS2005}]\label{THM:AKS-thm}
For an integer $k\ge 2$, every $C_{2k}$-free graph with $m$ edges has a cut of size  at least
\begin{align}\label{THM:AKS-bound}
m/2+\Omega(m^{(2k+1)/(2k+2)}).
\end{align}
The bound is tight for $2k\in\{4,6,10\}$.
\end{theorem}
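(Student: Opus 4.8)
The plan is to estimate the \emph{surplus} $\operatorname{sp}(G):=\operatorname{mc}(G)-m/2\ge 0$, where $\operatorname{mc}(G)$ is the maximum size of a cut of $G$, and to show $\operatorname{sp}(G)=\Omega_k\!\big(m^{(2k+1)/(2k+2)}\big)$. Two elementary facts drive the reductions. First, deleting a vertex never decreases the surplus: given a maximum cut of $G-v$, reinsert $v$ on the side collecting at least $d_G(v)/2$ of its edges. Second, the surplus is superadditive over the blocks (maximal $2$-connected subgraphs) of $G$, and every block $B$ has $\operatorname{sp}(B)=\Omega(\sqrt{e(B)})$ by Edwards' bound. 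The first fact lets us delete low-degree vertices for free; the second disposes of graphs that are very ``star-like'' or degenerate (a star on $m$ edges already has surplus $\Omega(m)$). So the hard instances are the nearly regular, pseudorandom ones.

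The forbidden cycle enters in two ways. (a) \textbf{Global sparsity:} by the Bondy--Simonovits theorem a $C_{2k}$-free graph on $N$ non-isolated vertices has $O_k(N^{1+1/k})$ edges, so $N=\Omega_k(m^{k/(k+1)})$, and the same holds for any subgraph retaining a positive fraction of the edges. (b) \textbf{Local sparsity:} if $G[N(v)]$ contained a path $u_0u_1\cdots u_{2k-2}$ on $2k-1$ vertices, then $vu_0u_1\cdots u_{2k-2}v$ would be a $2k$-cycle of $G$; hence $G[N(v)]$ contains no path on $2k-1$ vertices, and the Erd\H{o}s--Gallai theorem gives $e\big(G[N(v)]\big)\le \tfrac{2k-3}{2}\,d_G(v)=O_k\big(d_G(v)\big)$.

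The analytic engine is a Shearer--Goemans--Williamson-type estimate: there is $c_k>0$ such that every graph $H$ with $e\big(H[N(v)]\big)=O_k\big(d_H(v)\big)$ for every $v$ satisfies $\operatorname{sp}(H)\ge c_k\sum_{v}\sqrt{d_H(v)}$. One proof takes a uniformly random cut $\chi$, notes that $b_v:=\sum_{u\sim v}\chi(u)$ is a sum of $d_H(v)$ independent signs with $\mathbb{E}|b_v|=\Theta\big(\sqrt{d_H(v)}\big)$, and then performs a single round of local re-optimisation, the local sparsity ensuring that the $\Omega(\sqrt{d_H(v)})$ gains at distinct vertices essentially do not cancel; alternatively one exhibits a feasible vector solution of the Max-Cut semidefinite program whose value beats $m/2$ by $\Omega\big(\sum_v\sqrt{d_H(v)}\big)$ and then applies hyperplane rounding. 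This is the step I expect to be the main obstacle: controlling the interaction of the simultaneous local improvements (equivalently, producing the good SDP solution) is delicate, because local sparsity bounds only the \emph{average} co-degree within a neighbourhood, not the maximum.

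Finally I would assemble the pieces. Using the first reduction, delete every vertex of current degree below $\bar d/4$, where $\bar d=2m/n$; this removes fewer than $m/2$ edges, leaving a $C_{2k}$-free $H$ with $e(H)>m/2$ and $\delta(H)\ge \bar d/4$. When $n=O_k\big(m^{k/(k+1)}\big)$ we have $\bar d=\Omega_k\big(m^{1/(k+1)}\big)$ and $n(H)=\Omega_k\big(m^{k/(k+1)}\big)$ by (a), so, combining (b) with the engine,
\[
\operatorname{sp}(G)\ \ge\ \operatorname{sp}(H)\ \ge\ c_k\sum_{v\in V(H)}\sqrt{d_H(v)}\ \ge\ c_k\,n(H)\sqrt{\delta(H)}\ =\ \Omega_k\!\big(m^{\frac{k}{k+1}+\frac{1}{2(k+1)}}\big)\ =\ \Omega_k\!\big(m^{\frac{2k+1}{2k+2}}\big);
\]
if instead $n$ is larger, so $\bar d$ is small, then either $\Delta(G)$ is small and $\operatorname{sp}(G)\ge c_k\sum_v\sqrt{d_v}\ge 2c_k\,m/\sqrt{\Delta(G)}$ already exceeds $m^{(2k+1)/(2k+2)}$, or the blockwise Edwards bound does, and reconciling the two regimes is routine bookkeeping. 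For tightness when $2k\in\{4,6,10\}$, the polarity-type incidence graphs of the generalized triangle, quadrangle and hexagon are $C_{2k}$-free, essentially $d$-regular with $d=\Theta\big(m^{1/(k+1)}\big)$ on $\Theta\big(m^{k/(k+1)}\big)$ vertices, and pseudorandom with $|\lambda_{\min}|=\Theta(\sqrt d)$, so $\operatorname{mc}(G)\le m/2+|\lambda_{\min}|\,N/4=m/2+O\!\big(m^{(2k+1)/(2k+2)}\big)$ matches \eqref{THM:AKS-bound}.
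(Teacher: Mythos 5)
This theorem is not proved in the paper: it is quoted from Alon--Krivelevich--Sudakov \cite{AKS2005}, and the paper only records that its proof combines the sparse-neighbourhood Shearer-type bound (Theorem \ref{THM:AKS-theorem-sparse}) with a probabilistic/degree-sequence argument. Your outline correctly reconstructs exactly that decomposition: local sparsity of neighbourhoods from the absence of a path on $2k-1$ vertices in $G[N(v)]$ plus Erd\H{o}s--Gallai (the same observation the paper uses in Lemma \ref{max bisection with max degree}), the engine $\operatorname{sp}(H)\ge c_k\sum_v\sqrt{d_H(v)}$, and then a lower bound on $\sum_v\sqrt{d_v}$ via Bondy--Simonovits. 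The tightness examples you name are also the standard ones.

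That said, two points keep this from being a proof. First, the central analytic engine --- Theorem \ref{THM:AKS-theorem-sparse} --- is asserted rather than proved; you yourself flag it as ``the main obstacle,'' and the one-round local re-optimisation you sketch is precisely where all the work of \cite{AKS2005} lies (controlling the cancellation among simultaneous local improvements is the whole theorem). Since this lemma carries essentially the entire content of the result, the proposal is an outline, not a proof. Second, your handling of the low-average-degree regime is wrong as stated: if $\Delta(G)$ is large, the blockwise Edwards bound gives only $\Omega(\sqrt{m})$ when the graph is $2$-connected, which is far below $m^{(2k+1)/(2k+2)}$, so the dichotomy ``either $\Delta$ is small or Edwards wins'' does not close the case. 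The correct and uniform fix --- the one the paper itself executes in the proof of Lemma \ref{max bisection with max degree} --- is the degeneracy argument: Bondy--Simonovits forces $G$ to be $O_k(m^{1/(k+1)})$-degenerate, whence
\[
\sum_{v}\sqrt{d(v)}\;\ge\;\sum_v \frac{d^+(v)}{\sqrt{D}}\;=\;\frac{m}{\sqrt{D}}\;=\;\Omega_k\!\left(m^{(2k+1)/(2k+2)}\right)
\]
with no case analysis on $n$ or $\Delta$ at all. Replacing your two-regime bookkeeping by this single estimate would repair the assembly step; the missing proof of the sparse-neighbourhood bound remains the genuine gap.
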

In the same paper, they conjectured that the bound $m/2+\Omega(m^{(r+1)/(r+2)})$ remains valid for $C_{r}$-free graphs with odd $r$. Following multiple intermediate steps (\cite{FHM2023,Zen2018}), it was confirmed  by Glock, Janzer and Sudakov \cite{Glock2023} using combining techniques from semidefinite programming, probabilistic reasoning, as well as combinatorial and spectral arguments.

\subsection{Max-Bisections of $C_{2k}$-free graphs}

In this paper, we focus on the \emph{Max-Bisection problem}: find a bisection of a given graph that maximizes its size. As noticed by Bollob\'as and Scott \cite{Bol2002}, Max-Bisection problems are very different from Max-Cut problems. For example, the Edwards' bound implicitly implies that a connected graph $G$ with $n$ vertices and $m$ edges admits a cut of size at least $m/2+(n-1)/4$ (see e.g. \cite{Erd1997,PT1994}). However,  every bisection of the complete bipartite graph $K_{d,n-d}$ with $m$ edges has size at most $\lceil (m+d^2)/2 \rceil$. Motivated by Max-Cuts, Bollob\'as and Scott \cite{Bol2002} asked the following problem.
\begin{problem}[Bollob\'as and Scott \cite{Bol2002}]\label{B-S-Problem}
What are the largest and smallest cuts that we can guarantee with bisections of graphs?
\end{problem}

Note that for any sufficiently small $\epsilon>0$, $K_{d,n-d}$ implies that we cannot guarantee a bisection of size $m/2+\Omega(m^{\epsilon})$ for $H$-free graphs with $m$ edges if $H$ contains an odd cycle. Thus, forbidden even cycle seems reasonable for Problem \ref{B-S-Problem}.
The majority of results are to find a bisection of size at least $m/2+cn$ for some $c>0$ in certain graphs with $n$ vertices and $m$ edges (see e.g. \cite{jin2019, LLS2013,Ji2019,Xu2010,Xu2010balanced,Xu2014}).
It would be natural to explore a bisection-type extension of the Alon-Krivelevich-Sudakov Theorem.

Let $G$ be a connected $C_4$-free graph with $n$ vertices, $m$ edges and minimum degree at least 2. Fan, Hou and Yu \cite{FHY2018} proved that $G$ admits a bisection of size at least $m/2+cn.$  This, along with the classical Bondy-Simonovits Theorem \cite{BS1974}, yields a bisection of $G$ with size at least $m/2+\Omega(m^{2/3})$. The following problem was initially posed in \cite{FHY2018} and subsequently explicitly stated as an open problem in \cite{HWY2024}.
\begin{problem}[\cite{FHY2018, HWY2024}]\label{Prob:Fan-Hou-Yu}
Find the largest exponent $\epsilon$ such that every $C_4$-free graph with $m$ edges and minimum degree at least 2 has a bisection of size at least $m/2+\Omega(m^{\epsilon})$.
\end{problem}

Using a bisection version of Shearer's randomized algorithm \cite{She1992},  Lin and Zeng \cite{Lin2021} showed that for any integer $k \ge3$, every $\{C_4,C_6,C_{2k}\}$-free graph with a perfect
matching admits a bisection satisfying \eqref{THM:AKS-bound}. In the same paper, they conjectured that the existence of a perfect matching could be ensured solely through a minimum degree condition, which was confirmed by Hou, Wu and Zhong \cite{HWY2024} recently.

\begin{theorem}[Hou, Wu and Zhong \cite{HWY2024}]\label{the1.2}
For any fixed integer $k \geq 3$, let $G$ be a connected $\{C_4,C_6,C_{2k}\}$-free graph with $m$ edges and minimum degree at least 2. Then $G$ admits a bisection of size at least $m/2+\Omega(m^{(2k+1)/(2k+2)})$.
\end{theorem}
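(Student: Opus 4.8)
The plan is to split the argument according to the edge density of $G$, and in the dense regime to reduce the problem to the perfect‑matching case already settled by Lin and Zeng. Fix a small constant $c_1 = c_1(k)>0$. If $G$ is \emph{sparse}, say $m \le c_1 n^{(2k+2)/(2k+1)}$, then $m^{(2k+1)/(2k+2)} \le c_1^{(2k+1)/(2k+2)}\, n$, so it is enough to find a bisection of size at least $m/2 + \Omega(n)$. Since $G$ is connected, $C_4$‑free and has minimum degree at least $2$, the theorem of Fan, Hou and Yu supplies exactly such a bisection, which disposes of this case.

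So assume $G$ is \emph{dense}, $m > c_1 n^{(2k+2)/(2k+1)}$. By the Bondy--Simonovits theorem, forbidding $C_{2k}$ forces $m = O(n^{1+1/k})$, so the three parameters $n$, $m$ and the maximum degree $\Delta$ are all polynomially comparable; in particular $n = O(m^{k/(k+1)}) = o\big(m^{(2k+1)/(2k+2)}\big)$, so moving any bounded number of vertices of a cut costs only $o\big(m^{(2k+1)/(2k+2)}\big)$. If $G$ has a perfect matching then the theorem of Lin and Zeng already yields a bisection of the required size. If $n$ is odd, I would first delete a minimum‑degree vertex lying in a convenient component of a Gallai--Edmonds decomposition, apply the even‑order argument to the remainder, and finally put the deleted vertex on the smaller side, perturbing the cut size by at most $\Delta = o\big(m^{(2k+1)/(2k+2)}\big)$.

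The substantial case is that $G$ has even order but no perfect matching. Here I would take a Gallai--Edmonds decomposition $V(G) = D \cup A \cup C$, so that $G[C]$ has a perfect matching, every component of $G[D]$ is factor‑critical, and $\mathrm{def}(G) = c(G[D]) - |A| \ge 2$. The crux is a structural claim exploiting $\delta(G)\ge 2$ and $\{C_4,C_6\}$‑freeness: the "small" factor‑critical components of $G[D]$ (isolated vertices, triangles, $5$‑cycles) are tightly constrained — an isolated vertex of $G[D]$ sends at least two edges into $A$, and $C_4$‑ and $C_6$‑freeness turn the bipartite incidence pattern between such components and $A$ into a graph of girth at least $8$, hence of few edges — so, together with $m > c_1 n^{(2k+2)/(2k+1)}$, almost all edges of $G$ are incident to $C\cup A$ and to the "large" components of $G[D]$, each of which by itself carries a near‑perfect matching. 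I would then build the bisection in three steps: fix a maximum matching $M$, covering all but exactly $\mathrm{def}(G)$ vertices, with the missed vertices chosen one per residual component of $G[D]$ and, by factor‑criticality, taken of smallest degree in $G$; run the bisection version of Shearer's randomized procedure used by Lin and Zeng with each edge of $M$ contracted so that its two ends are forced onto opposite sides (this produces the $\Omega\big(m^{(2k+1)/(2k+2)}\big)$ surplus and keeps the $M$‑covered part balanced in expectation); and finally repair the residual imbalance of size $\mathrm{def}(G)$ by distributing the uncovered low‑degree vertices evenly between the two sides.

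The hard part will be the dense‑case structural claim, specifically the bookkeeping for the repair step: one must show that $\mathrm{def}(G)$ together with the degrees of the vertices missed by a maximum matching cannot consume the $\Omega\big(m^{(2k+1)/(2k+2)}\big)$ surplus — i.e., that the sum of those degrees (plus $\Delta$ times the number of large components of $G[D]$) is $o\big(m^{(2k+1)/(2k+2)}\big)$. This is exactly where all three hypotheses — $\delta(G)\ge 2$, the forbidden short even cycles $C_4$ and $C_6$, and the density forced by forbidding $C_{2k}$ — must be used in concert through Kővári--Sós--Turán‑type estimates on the girth‑$\ge 8$ incidence pattern; the calculation is delicate because neither $\Delta$ nor $\mathrm{def}(G)$ is bounded a priori, and it is the reason the forbidden set must include $C_6$ in addition to $C_4$.
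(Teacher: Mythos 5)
This statement is quoted from \cite{HWY2024}; the paper you are reading gives no proof of it (its own, stronger Theorem \ref{max bisection for c2kfree} is obtained by a completely different, SDP-based route that avoids matchings altogether), so your proposal has to stand on its own. Your sparse regime is fine: when $m\le c_1 n^{(2k+2)/(2k+1)}$ the Fan--Hou--Yu bound $m/2+(n-1)/4$ already beats $m/2+\Omega(m^{(2k+1)/(2k+2)})$, and this is indeed the standard first reduction.

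The dense regime, however, has a concrete error and a genuine gap. First, you invert Bondy--Simonovits: $m=O(n^{1+1/k})$ gives $n=\Omega(m^{k/(k+1)})$, a \emph{lower} bound on $n$; the upper bound you actually get, from the density assumption, is only $n=O(m^{(2k+1)/(2k+2)})$, which is the same order as the surplus you are trying to create, not $o$ of it. So ``moving any bounded number of vertices costs only $o(m^{(2k+1)/(2k+2)})$'' is unjustified for arbitrary vertices (a vertex may have degree comparable to $n$); it is salvageable only if you route every repair step through vertices of degree $O(m/n)=O(m^{1/(k+1)})$, which you must then guarantee are available at each such step. Second, and more seriously, the entire content of the theorem sits in the step you defer: controlling the Gallai--Edmonds deficiency and the cost of rebalancing the uncovered vertices. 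Your proposed mechanism is also shaky where it is most needed: $\{C_4,C_6\}$-freeness of $G$ forces girth at least $8$ in the bipartite graph between $A$ and the \emph{isolated} vertices of $G[D]$, but for triangle or $C_5$ components a $4$-cycle in the component--$A$ incidence pattern corresponds to a cycle of length $6$, $8$ or $10$ in $G$ (two $A$-vertices plus paths through two odd components), most of which are not forbidden, so the K\H{o}v\'ari--S\'os--Tur\'an-type edge count you invoke does not follow. As written, the proposal is a plausible program in the spirit of \cite{HWY2024} rather than a proof: the decisive estimate --- that the degrees of the $\mathrm{def}(G)$ uncovered vertices, and the distortion they cause in the Shearer-type surplus, total $o(m^{(2k+1)/(2k+2)})$ --- is asserted but not established.
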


Our main contribution is a bisection variant of the Alon-Krivelevich-Sudakov Theorem, which not only answers Problem \ref{Prob:Fan-Hou-Yu} in full generality but also enhances Theorem \ref{the1.2}.

\begin{theorem}\label{max bisection for c2kfree}
Given an integer $k\ge2$, let $G$ be a $C_{2k}$-free graph with $n$ vertices, $m$ edges and minimum degree at least $k$. Then $G$ has a bisection of size at least
\begin{align*}
m/2+\Omega(m^{(2k+1)/(2k+2)}).
\end{align*}
The bound is tight for $2k\in\{4,6,10\}$.
\end{theorem}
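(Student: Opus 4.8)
The plan is to follow the standard two-stage strategy for Max-Bisection problems: first produce a large (unbalanced) cut of $G$ using the Alon--Krivelevich--Sudakov Theorem (Theorem~\ref{THM:AKS-thm}), then convert that cut into a bisection by moving a small number of vertices across the partition while controlling the loss in cut size. By Theorem~\ref{THM:AKS-thm}, $G$ has a cut $(A,B)$ with $e(A,B)\ge m/2+c\,m^{(2k+1)/(2k+2)}$ for some absolute constant $c=c(k)>0$; write $\beta:=c\,m^{(2k+1)/(2k+2)}$ for the surplus. Without loss of generality $|A|\ge|B|$, and let $t:=|A|-|B|$ (so we need to move roughly $t/2$ vertices from $A$ to $B$). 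The key quantitative fact I would isolate, matching the abstract's ``key bound for bisections of graphs with sparse neighborhoods concerning the degree sequence,'' is that in a $C_{2k}$-free graph the number of edges is $m=O(n^{1+1/k})$ by the Bondy--Simonovits Theorem, so $\beta=\Omega(m^{(2k+1)/(2k+2)})$ is also $\Omega(n^{(2k+1)/(2k+2)\cdot(1+1/k)})$; but more usefully, the minimum degree condition $\delta(G)\ge k$ forces $n\le 2m/k$, and in fact $n=\Omega(m^{k/(k+1)})$.

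The heart of the argument is the rebalancing step. I would split into two regimes according to the size of $t$. If $t$ is small — concretely $t=O(\beta/\Delta')$ for a suitable ``typical degree'' scale — then one can move $t/2$ vertices from $A$ to $B$, each chosen greedily to have at least as many neighbors in $A$ as in $B$ (relative to the current partition), losing at most a bounded amount per move; summing, the total loss is $o(\beta)$ and we retain surplus $\Omega(\beta)$. The difficulty is that high-degree vertices can cause large individual losses, so one must argue that $G$ cannot have too many high-degree vertices: this is exactly where $C_{2k}$-freeness and the degree sequence enter, via a convexity/counting bound on $\sum_v \binom{d(v)}{2}$ or on the number of paths of length $k$, bounding $\sum_v d(v)^{1+1/k}$ or the tail $\sum_{d(v)\ge D} d(v)$. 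If instead $t$ is large, then $|A|$ is much bigger than $|B|$; here I would instead build the bisection from scratch using a randomized rounding of the Goemans--Williamson semidefinite relaxation, or a balanced version of Shearer's randomized algorithm, choosing the random partition so that the expected imbalance is $0$ (pair up antipodal choices) while the expected cut size is $m/2+\Omega(\sum_v \sqrt{d(v)})=m/2+\Omega(m^{(2k+1)/(2k+2)})$, the last estimate again using the $C_{2k}$-free degree bound together with power-mean inequalities.

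The main obstacle I anticipate is making the ``greedy rebalancing loses only $o(\beta)$'' step rigorous uniformly in $k$: one must show that after removing any $t/2$ vertices the accumulated deficit is genuinely smaller order than the ACK surplus, and the crude bound ``loss $\le \sum$ of degrees of moved vertices'' is far too weak. The fix is to move vertices in a carefully chosen order — always a vertex currently having a deficit (more neighbors on its own side than the other), which by a discharging/potential argument exists as long as the partition is not locally optimal — so that each move is nonnegative in isolation and the only genuine loss comes from interactions among moved vertices, which is controlled by the number of edges \emph{inside} the set $S$ of moved vertices; then $e(S)\le e(G[S])=O(|S|^{1+1/k})$ by Bondy--Simonovits applied to the subgraph induced on $S$, and one checks $|S|^{1+1/k}=O(t^{1+1/k})=o(m^{(2k+1)/(2k+2)})$ in the relevant regime. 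Patching the two regimes together, and checking the tightness claim for $2k\in\{4,6,10\}$ by exhibiting the same extremal configurations (incidence graphs of generalized polygons, made to satisfy $\delta\ge k$ and have even order) that witness tightness in Theorem~\ref{THM:AKS-thm}, completes the proof.
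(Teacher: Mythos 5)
There is a genuine gap, and it sits exactly where the difficulty of the theorem lies: the rebalancing step. Your first regime starts from a (near‑)maximum cut $(A,B)$ given by Theorem~\ref{THM:AKS-thm} and proposes to move vertices that ``currently have a deficit (more neighbors on their own side).'' But a maximum cut is locally optimal: every $v\in A$ satisfies $d_B(v)\ge d_A(v)$, so no such vertex exists and every move is lossy. Moving a set $S\subseteq A$ to $B$ changes the cut size by $\sum_{v\in S}\bigl(d_A(v)-d_B(v)\bigr)-2e(G[S])$, so the loss is \emph{not} controlled by $e(G[S])$ alone; it includes $\sum_{v\in S}\bigl(d_B(v)-d_A(v)\bigr)$, which can be of order $\sum_{v\in S}d(v)$ and can swamp the surplus $\beta$ (think of cuts that isolate a few high‑degree hubs). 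Moreover you never bound the imbalance $t$ of the AKS cut, and in the second regime the assertion that a ``balanced'' Shearer/SDP rounding with \emph{expected} imbalance $0$ yields a bisection ignores that the actual imbalance still has to be corrected; the cost of that correction is precisely the obstruction. Finally, $\sum_v\sqrt{d(v)}=\Omega(m^{(2k+1)/(2k+2)})$ does not follow from power‑mean inequalities (which go the wrong way); it requires a degeneracy argument via Bondy--Simonovits. Notably, your argument never uses the hypothesis $\delta(G)\ge k$, which is shown to be necessary by $K_{k-1,n-k+1}$, so something essential must be missing.

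The paper's route quantifies the balancing cost explicitly: a Goemans--Williamson‑type rounding (Lemma~\ref{sdp lemma}) gives a bisection of size at least $m/2+c_1\sum_v\sqrt{d(v)}-2m\sqrt{\Delta/n}$ for graphs with sparse neighborhoods (Theorem~\ref{max bisection}). This suffices when $\Delta$ is small (Lemma~\ref{max bisection with max degree}); when $\Delta$ is large, the vertex set is split into the high‑degree part $A$ and the rest $B$, the Naor--Verstra\"ete bipartite Tur\'an bound forces $e(A,B)=O(n)$ and hence $m=O(n)$, and then the Lee--Loh--Sudakov theorem applied to $G$ minus at most $k-1$ vertices of degree close to $n$ (this is where $\delta\ge k$ is used, to keep minimum degree at least $1$) gives a surplus of $\Omega(n)=\Omega(m)$. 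Your proposal would need to be rebuilt around some such control of high‑degree vertices; as written, both regimes fail.
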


Considering bisections of complete bipartite graphs $K_{k-1,n-k+1}$, we remark that the minimum degree condition in Theorem \ref{max bisection for c2kfree} is tight. We will prove Theorem \ref{max bisection for c2kfree} when $k\ge 3$ in Section \ref{Bisections of C_2k-free graphs} and when $k=2$ in Section \ref{Bisections of C_4-free graphs}.



\subsection{Max-Bisections of graphs with sparse neighborhoods}

The degree sequence plays a key role in the Max-Cut problem. It is initialed by Shearer \cite{She1992} who showed that every triangle-free graph with $n$ vertices, $m$ edges and degree sequence $d_{1}, d_{2},\ldots ,d_{n}$ admits  a bipartition of size at least
\begin{equation}\label{sh-bound}
\frac m2+\Omega\left(\sum_{i=1}^{n}\sqrt{d_{i}}\right).
\end{equation}
Usually, the lower bound \eqref{sh-bound}, known as \emph{Shearer's bound}, serves as a pivotal foundation that has facilitated the derivation of several intriguing results in this field. A direct corollary, given by Shearer \cite{She1992} and remarked by Alon \cite{A1996}, is that if a graph with $m$ edges has a cut satisfying Shearer's bound, then it also has a cut of size at least $m/2+\Omega(m^{3/4})$. In fact, the Shearer's bound, augmented by supplementary techniques, typically produces stronger bounds for Max-Cuts in $H$-free graphs. Notable instances include the work of Alon, Krivelevich and Sudakov  \cite{AKS2005} for graphs with sparse neighborhoods.

\begin{theorem}[Alon, Krivelevich and Sudakov \cite{AKS2005}]\label{THM:AKS-theorem-sparse}
Let $G$ be a graph with $n$ vertices, $m$ edges and vertex degrees $d_1, d_2,\dots, d_n$. If there is an absolute positive constant $\epsilon$ such that the induced subgraph on any set of $d$ vertices, all of which have a common neighbor, contains at most $\epsilon d^{3/2}$ edges, then $G$ has a cut with size at least
\[
\frac m2+\Omega\left(\sum_{i=1}^n\sqrt{d_i}\right).
\]
\end{theorem}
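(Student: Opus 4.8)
The plan is to start from a uniformly random bipartition and then refine it with one round of \emph{independent, biased} vertex moves, the move probability of each vertex calibrated to its local imbalance; the sparse‑neighbourhood hypothesis will be invoked only at the very end, to control the second‑order interaction between the moves. Let $(A_0,B_0)$ be a uniformly random partition of $V(G)$, and for a vertex $v$ let $h_v$ denote the imbalance at $v$, i.e.\ the number of neighbours of $v$ on $v$'s own side minus the number on the opposite side, so that relocating $v$ alone changes the cut size by $h_v$. Fix a small constant $\alpha=\alpha(\epsilon)>0$ and, conditionally on $(A_0,B_0)$, relocate each vertex $v$ independently with probability $p_v:=\min\{\alpha h_v^+/\sqrt{d_v},\,1/2\}$, where $h_v^+=\max\{h_v,0\}$; call the resulting partition $(A',B')$. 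Tracking the effect of the relocations edge by edge, and writing $\eta_{uv}\in\{+1,-1\}$ for the $\pm1$‑valued indicator of whether $u$ and $v$ lie on the same side of $(A_0,B_0)$, one obtains the identity
\[
\mathbb{E}\bigl[e(A',B')\bigr]\;=\;\frac m2\;+\;\underbrace{\sum_{v\in V(G)}\mathbb{E}\bigl[p_vh_v^+\bigr]}_{=:\,M}\;-\;\underbrace{2\sum_{uv\in E(G)}\mathbb{E}\bigl[\eta_{uv}p_up_v\bigr]}_{=:\,R}.
\]
Since, conditionally on the side of $v$, the variable $h_v$ is a sum of $d_v$ independent $\pm1$ variables of mean $0$ and variance $d_v$, a routine moment calculation gives $M\ge c_1\alpha\sum_v\sqrt{d_v}$ for an absolute constant $c_1>0$ once $\alpha$ is small (the truncation at $1/2$ discards only an exponentially small tail). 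So it suffices to show $|R|\le\tfrac12M$ for a suitable $\alpha=\alpha(\epsilon)$, after which some bipartition of $G$ has size at least $m/2+\tfrac12c_1\alpha\sum_v\sqrt{d_v}=m/2+\Omega\bigl(\sum_v\sqrt{d_v}\bigr)$.

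The heart of the matter is the estimate of the pairwise correction $R$. I would fix an edge $uv$, peel off the contribution of the edge $uv$ itself by writing $h_u=\eta_{uv}+h_u'$ and $h_v=\eta_{uv}+h_v'$, and then condition successively on the colours $\sigma_u,\sigma_v$ and on the colours of the common neighbours of $u$ and $v$. After this conditioning, $h_u'$ and $h_v'$ become independent, symmetric, well‑spread $\pm1$ sums (of roughly $d_u$ and $d_v$ terms), and the only residual coupling between $h_u$ and $h_v$ is carried by the $\lambda_{uv}:=|N(u)\cap N(v)|$ shared common‑neighbour variables. The function $x\mapsto\min\{\alpha x^+/\sqrt{d_v},1/2\}$ is $O(\alpha/\sqrt{d_v})$‑Lipschitz, its average against a spread distribution is $O(\alpha)$ in size, and its second difference is of order $\alpha/d_v$; feeding a first‑order bound into the independent part and a second‑order (Taylor) bound into the shared part, and using the sign $\eta_{uv}$ to extract the necessary cancellation, should give
\[
\bigl|\mathbb{E}[\eta_{uv}p_up_v]\bigr|\;\le\;C\alpha^2\Bigl(\frac1{\sqrt{d_u}}+\frac1{\sqrt{d_v}}\Bigr)\;+\;C\alpha^2\,\lambda_{uv}\Bigl(\frac1{d_u}+\frac1{d_v}\Bigr)
\]
for an absolute constant $C$.

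Summing over all edges is where the hypothesis enters. The first term contributes $\sum_{uv\in E(G)}O(\alpha^2)(d_u^{-1/2}+d_v^{-1/2})=O(\alpha^2)\sum_v\sqrt{d_v}$. For the second term I would reorganise by vertices: for every $u$, the set $N(u)$ consists of $d_u$ vertices sharing the common neighbour $u$, so the hypothesis yields $e(G[N(u)])\le\epsilon d_u^{3/2}$, and since $\sum_{v\sim u}\lambda_{uv}=\sum_{v\in N(u)}\deg_{G[N(u)]}(v)=2e(G[N(u)])$,
\[
\sum_{uv\in E(G)}\lambda_{uv}\Bigl(\frac1{d_u}+\frac1{d_v}\Bigr)\;=\;\sum_{u}\frac1{d_u}\sum_{v\sim u}\lambda_{uv}\;=\;\sum_u\frac{2e(G[N(u)])}{d_u}\;\le\;2\epsilon\sum_u\sqrt{d_u}.
\]
Hence $|R|\le C'\alpha^2(1+\epsilon)\sum_v\sqrt{d_v}$, and choosing $\alpha=\alpha(\epsilon)$ small enough that $C'\alpha^2(1+\epsilon)\le\tfrac12c_1\alpha$ completes the argument. (Morally, since $\sum_{uv}\lambda_{uv}=3\cdot\#\{\text{triangles of }G\}$, the correction $R$ is being paid for by the triangles of $G$, which the sparse‑neighbourhood condition keeps few and well distributed.)

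The step I expect to be the main obstacle is the per‑edge bound above: making the second‑order estimate uniform over all degree patterns. When $\lambda_{uv}$ is comparable to $\min\{d_u,d_v\}$ the remainder obtained after stripping the common neighbours is no longer well spread, so the clean second‑difference bound degrades and such ``heavy‑overlap'' edges must be treated separately. Fortunately the same hypothesis shows each vertex $u$ is incident to only $O(\epsilon\sqrt{d_u})$ of them (because $\sum_{v\sim u}\lambda_{uv}\le2\epsilon d_u^{3/2}$), and on those edges one can afford the cruder bound $|\mathbb{E}[\eta_{uv}p_up_v]|=O(\alpha^2)$, obtained by observing that both smoothed move‑probabilities are $O(\alpha)$ at bulk arguments. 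Pinning down all the Lipschitz and Taylor constants, and checking the small‑degree corner cases, is the routine‑but‑delicate remainder.
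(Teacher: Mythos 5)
The paper never proves Theorem \ref{THM:AKS-theorem-sparse}; it is quoted from \cite{AKS2005}, and the closest argument in the text is the semidefinite-programming proof of its bisection analogue, Theorem \ref{max bisection}, in Section \ref{SEC:pf-thm-sparse}. Your proposal is correct in outline but takes the other classical route: a Shearer-type two-round randomized local improvement, which is essentially the original Alon--Krivelevich--Sudakov argument. Your identity for $\mathbb E[e(A',B')]$, the estimate $M=\Omega(\alpha\sum_v\sqrt{d_v})$, and the reorganisation $\sum_{v\sim u}\lambda_{uv}=2e(G[N(u)])\le 2\epsilon d_u^{3/2}$ are all right (note you only invoke the hypothesis for full neighbourhoods, which is also all that the paper's Theorem \ref{max bisection} assumes). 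The paper's machinery would instead feed the unit vectors $\mathbf y_v$ of Section \ref{SEC:pf-thm-sparse} into the cut bound of Lemma \ref{LEM:Max-Cut-sdp-emma}: the computation already carried out there gives $\sum_{uv\in E(G)}\arcsin(\langle\mathbf y_u,\mathbf y_v\rangle)\le-\frac{\gamma}{1+\gamma^2}\bigl(1-\frac{C\pi\gamma}{2}\bigr)\sum_v\sqrt{d_v}$, so the theorem follows in a few lines; what the SDP formulation buys is precisely that your awkward second-order correlation estimate collapses into the elementary inequality $\arcsin(x)\le\frac{\pi}{2}b-a$. Conversely, your route is more elementary and makes the role of the triangles/codegrees transparent. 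The one step you must write out with care is the per-edge bound: a purely first-order (Lipschitz) treatment of the shared sum $T=\sum_{w\in N(u)\cap N(v)}\eta_{uw}$ leaves a term of order $\alpha^2\sqrt{\lambda_{uv}}\,(d_u^{-1/2}+d_v^{-1/2})$, which only sums to $O\bigl(\alpha^2\sqrt{\epsilon}\sum_v d_v^{3/4}\bigr)$ and is too large; you genuinely need, as you indicate, the symmetrisation $\mathbb E_T[g(1+T)]=g(1)+O(\alpha\lambda_{uv}/d_v)$ coming from the oddness of $g(c)=\mathbb E[\phi_v(Z_v+c)-\phi_v(Z_v-c)]$, the symmetry of $T$, and the anticoncentration of the stripped sum $Z_v$, together with the separate crude $O(\alpha^2)$ treatment of the $O(\epsilon\sqrt{d_u})$ heavy-overlap edges at each vertex. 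With those details supplied the argument closes, with $\alpha=\alpha(\epsilon)$ and hence an $\epsilon$-dependent implied constant, consistent with the statement.
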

The proof of Theorem \ref{THM:AKS-thm} in \cite{AKS2005} is a combination of Theorem \ref{THM:AKS-theorem-sparse} and a probabilistic reasoning. It is natural to seek sufficient conditions under which the Max-Bisections satisfy the Shearer's bound. The first step was given by Lin and Zeng \cite{Lin2021} who showed that every  $\{C_{4},C_{6}\}$-free graph  with a perfect matching has a bisection satisfying the Shearer's bound. Recently, Hou, Wu and Zhong \cite{HWY2024} showed that the perfect matching condition can be replaced by the minimum degree condition. For more information on this area, we refer the reader to \cite{HX2025, RHZ2022, WX2024, WZ2025}. In this paper, we give a bisection version of Theorem \ref{THM:AKS-theorem-sparse}.

\begin{theorem}\label{max bisection}
Let $G$ be a graph with $n$ vertices, $m$ edges and degree sequence $d_1, d_2,\dots, d_n$. For each $i\in\{1,\ldots,n\}$, if the neighborhood of the vertex $i$ contains at most $\epsilon d_i^{3/2}$ edges for some constant $\epsilon>0$, then there is a constant $c=c(\epsilon)>0$ such that $G$ has a bisection of size at least
\begin{align}\label{THM:bisection-version-shear-bound}
\frac m2+c\sum_{i=1}^n\sqrt{d_i}-2m\sqrt{\Delta / n},
\end{align}
where $\Delta$ is the maximum degree of $G$.
\end{theorem}

Considering bisections of $K_{k, n}$, the term $m\sqrt{\Delta / n}$ in Theorem \ref{max bisection} is essential and cannot be omitted. The proof of Theorem \ref{max bisection} is presented in Section \ref{SEC:pf-thm-sparse}.



\section{Preliminaries}\label{SEC:Preliminaries}

Let $G$ be a graph. We use $v(G)$ and $e(G)$ to denote the number of vertices and edges of $G$, respectively. For a vertex $v\in V(G)$, let $N_G(v)$ denote the \emph{neighborhood} of $v$ and $d_G(v)=|N_G(v)|$ be the \emph{degree} of $v$. Let $\Delta(G)$ be the \emph{maximum degree} of $G$, and $\delta(G)$ be the \emph{minimum degree} of $G$, respectively. The \emph{average degree} of $G$ is $d(G)=2e(G)/v(G)$.  For $S\subseteq V(G)$, let $G[S]$ denote the subgraph induced by $S$ and $e_G(S)$ be the number of edges in $G[S]$. Hereafter, we omit the subscript $G$ for brevity.

We now present several key results that will be essential for our proofs. First, we recall some fundamental results about the number of edges in $C_{2k}$-free graphs.


\begin{theorem}[Bondy and Simonovits \cite{BS1974}]\label{c2k turan}
    Let $k\ge2$ be an integer and let $G$ be a graph on $n$ vertices. If $G$ contains no
cycle of length $2k$, then the number of edges of $G$ is at most $100kn^{1+1/k}$.
\end{theorem}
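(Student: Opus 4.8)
The plan is to prove the contrapositive: if $G$ has $n$ vertices and $e(G) > 100k\,n^{1+1/k}$, then $G$ contains a cycle of length $2k$. The first step is a standard cleanup to boost the minimum degree. Greedily deleting any vertex whose current degree is below the current edge-to-vertex ratio never decreases that ratio and terminates at a nonempty subgraph, so $G$ has a subgraph $H$ with $\delta(H) \ge e(G)/n > 100k\,n^{1/k} \ge 100k\,v(H)^{1/k}$, and $H$, being a subgraph of $G$, still contains no $C_{2k}$. Since $\delta(H) \le v(H)-1$ we also get $v(H) > 100k \ge 2k$, so there is room for the cycle we seek. Relabelling $H$ as $G$, it suffices to derive a contradiction from the hypothesis that $G$ is $C_{2k}$-free with $\delta(G) = d > 100k\,n^{1/k}$. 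If the cycle construction below needs $2$-connectivity, I would first pass to a block $B$, chosen with $e(B)/(v(B)-1) \ge e(G)/(n-1)$, and clean up once more inside $B$, at the cost of shrinking the absolute constant; a block with at least two edges is $2$-connected.

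Next I would run a breadth-first search from an arbitrary vertex $v$, writing $L_i$ for the set of vertices at distance exactly $i$ and $T$ for the BFS tree. The layers cannot expand by a factor of at least $d/(4k)$ for $k$ consecutive steps, since that would yield a layer of size at least $(d/(4k))^k > (25n^{1/k})^k = 25^k n > n$. So there is a \emph{first} layer $L_j$ at which the search stalls, meaning $|L_{j+1}|$ is much smaller than $d|L_j|/(4k)$; because the stall is the first one, every earlier layer expanded, so in particular $|L_{j-1}| \le (4k/d)|L_j| \ll |L_j|$. Each vertex of $L_j$ has degree at least $d$, of which only an $O(1/k)$-fraction can go forward on average, so most of the incidences from $L_j$ land in $L_{j-1}\cup L_j$, a set of size at most $2|L_j|$; pigeonholing, some vertex $w \in L_{j-1}\cup L_j\cup L_{j+1}$ is adjacent to a set $X \subseteq L_j$ with $|X| = \Omega(d)$. (If instead the forward edges from $L_j$ are concentrated on a few vertices of $L_{j+1}$, one of those serves as $w$ directly.) Finally, a counting argument over the subtrees of $T$ hanging below a fixed level should let me pick $x,y \in X$ whose last common ancestor in $T$ lies at a prescribed level.

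To build the cycle: if $x,y\in X$ have last common ancestor $z$ at level $j-(k-1)$, then the two $T$-paths $x\to z$ and $z\to y$ are internally disjoint with total length $2(k-1)$, and appending the common neighbour $w$ — which lies in $L_{j-1}\cup L_j\cup L_{j+1}$ and, after a brief check of which vertices of those layers could lie on the two paths, can be chosen off both — closes this into a cycle of length exactly $2(k-1)+2 = 2k$, the desired contradiction. More generally, using a detour from $x$ to $y$ through $w$ or through a short path avoiding $T$, and varying the level of the last common ancestor, the same construction yields cycles of every even length in a whole interval.

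The hard part, and the place where the argument is genuinely delicate, is forcing the length to be \emph{exactly} $2k$ rather than merely some even value: I need (i) the stall to occur deep enough, $j \ge k-1$, so that the target ancestor level $j-(k-1) \ge 0$ is available, and (ii) two vertices of the large set $X$ whose last common ancestor sits at precisely that level. I expect to handle both the way Bondy and Simonovits do — by tracking the full interval of even cycle lengths produced in the stalled region (which widens as $|X|$ and the layer depths grow) and verifying that $2k$ falls inside it, re-rooting the search or restarting inside a denser induced part when a first attempt stalls too shallowly. The tightness of the exponent for $2k\in\{4,6,10\}$ is witnessed by the usual incidence-graph (generalized polygon) constructions and plays no role in this upper bound.
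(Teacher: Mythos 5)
This statement is quoted from Bondy and Simonovits \cite{BS1974}; the paper offers no proof of it, so your attempt can only be judged on its own merits. Your framework (minimum-degree cleanup, BFS from a vertex, the observation that layers cannot expand by a factor of $d/(4k)$ for $k$ consecutive steps, locating a stalled layer, and pigeonholing to find a vertex $w$ with $\Omega(d)$ neighbours $X$ in that layer) is indeed the standard opening of the known proofs, and those steps are essentially fine.

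The genuine gap is the step you yourself flag as delicate and then do not carry out: producing a cycle of length \emph{exactly} $2k$. Your plan is to ``pick $x,y\in X$ whose last common ancestor in $T$ lies at a prescribed level,'' namely level $j-(k-1)$, but no counting argument over subtrees can guarantee this: nothing prevents the entire set $X$ from hanging below a single vertex of $L_{j-1}$ (indeed below a single ancestor at every level), in which case every pair $x,y\in X$ has its last common ancestor at level $j-1$ and the construction only yields very short even cycles. This is exactly the obstruction that forces the actual Bondy--Simonovits argument to work with the non-tree edges inside the stalled region, build a theta graph, and invoke their lemma that such a configuration (under a $2$-connectivity/size hypothesis) contains paths, and hence cycles, of a whole interval of lengths of the right parity, so that $2k$ is hit; your closing paragraph defers precisely this machinery to ``the way Bondy and Simonovits do,'' i.e., to the theorem being proved. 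Until you either prove a version of that interval-of-cycle-lengths lemma or give a correct replacement for the prescribed-ancestor-level claim, the proposal is a sketch of the setup with the central argument missing. (The remarks about re-rooting, choosing $w$ off the two tree paths, and the $w\in L_{j+1}$ case are also only asserted, but these are minor compared with the main gap.)
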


\begin{theorem}[Naor and Verstra\"ete \cite{NV2005}]\label{c2k bipartite turan}
    Let $k\ge2$ be an integer and let $G$ be an $a$ by $b$ bipartite graph with $a\le b$. If $G$ is $C_{2k}$-free, then
    \begin{align*}
    e(G)\le\begin{cases}
        2k\left((ab)^{\frac{k+1}{2k}}+a+b\right),&\text{if $k$ is odd};\\
        2k\left(a^{\frac{k+2}{2k}}b^{\frac12}+a+b\right),&\text{if $k$ is even}.
    \end{cases}
    \end{align*}
\end{theorem}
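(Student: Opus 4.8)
The plan is to produce the bisection through a randomised two-phase construction in the spirit of Shearer's algorithm \cite{She1992} and its sparse-neighbourhood refinement by Alon, Krivelevich and Sudakov \cite{AKS2005}, followed by a cheap rebalancing step. Phase one gives each vertex $v$ an independent uniform colour $\sigma(v)\in\{-1,+1\}$. Phase two fixes a small constant $p=p(C)>0$ and, for every vertex $v$ lying on the ``heavy'' side of its neighbourhood --- that is, with $\sigma(v)S_v>0$, where $S_v:=\sum_{u\in N(v)}\sigma(u)$ --- recolours $v$ to $-\sigma(v)$ with probability $p$, independently over $v$ given $\sigma$. Write $\tau(v)$ for the resulting colour and $(A,B):=(\tau^{-1}(+1),\tau^{-1}(-1))$. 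I would then invoke the Shearer/Alon--Krivelevich--Sudakov analysis of this process: conditioned on $\sigma$, recolouring $v$ alone contributes an expected gain $\tfrac{p}{2}\,\mathbb{E}\,|S_v|=\Theta\!\bigl(p\sqrt{d(v)}\bigr)$ to the cut, while the interaction between recolourings of adjacent vertices is controlled by the hypothesis $e(G[N(v)])\le Cd(v)^{3/2}$; taking $p$ small enough in terms of $C$ leaves a positive fraction of this first-order gain, so that $\mathbb{E}\,e(A,B)\ge m/2+c_0\sum_{v}\sqrt{d(v)}$ for some $c_0=c_0(C)>0$.

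The heart of the argument is to control, simultaneously, the imbalance $\bigl||A|-|B|\bigr|=\bigl|\sum_v\tau(v)\bigr|$. Put $\mu_v(\sigma):=\mathbb{E}[\tau(v)\mid\sigma]=\sigma(v)\bigl(1-2p\,\mathbf{1}[\sigma(v)S_v>0]\bigr)$ and split $\sum_v\tau(v)=\sum_v\bigl(\tau(v)-\mu_v(\sigma)\bigr)+\sum_v\mu_v(\sigma)$. Conditioned on $\sigma$ the first sum has independent, mean-zero, $[-1,1]$-valued terms, so it contributes at most $n$ to the second moment. For the second sum, $\mathbb{E}[\mu_u\mu_v]=0$ unless $u=v$, $u\sim v$, or $u$ and $v$ share a common neighbour, because $\mu_v$ depends only on $\sigma$ restricted to $\{v\}\cup N(v)$; and for the remaining pairs a discrete-influence and Rademacher-anti-concentration estimate (flipping one coordinate $\sigma(w)$ changes $\mu_u$ with probability only $O(1/\sqrt{d(u)})$) gives $\bigl|\mathbb{E}[\mu_u\mu_v]\bigr|=O\!\bigl(p^{2}\min\{1,\,|N(u)\cap N(v)|/\sqrt{d(u)d(v)}\}\bigr)$. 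Summing these estimates against the degree sequence, the combinatorial core is the inequality
\[
\sum_{w\in V(G)}\Bigl(\sum_{u\in N(w)}d(u)^{-1/2}\Bigr)^{2}\ \le\ n\Delta,
\]
a consequence of Cauchy--Schwarz followed by a double count; it yields $\mathbb{E}\bigl[(|A|-|B|)^{2}\bigr]=O_p(n\Delta)$ and hence $\mathbb{E}\,\bigl||A|-|B|\bigr|=O_p\!\bigl(\sqrt{n\Delta}\bigr)$.

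To finish, rebalance $(A,B)$: assuming $|A|\ge|B|$, move to $B$ a set $T$ consisting of $\lfloor(|A|-|B|)/2\rfloor$ vertices of $A$ with the fewest edges into $B$; the outcome is a bisection, and since $|A|\ge n/2$ the number of cut edges destroyed is at most $e(T,B)\le\frac{|T|}{|A|}\,e(A,B)\le\frac{|A|-|B|}{n}\,m$. Taking expectations over the whole construction and inserting the imbalance bound,
\[
\mathbb{E}\bigl[\,\text{size of the bisection}\,\bigr]\ \ge\ \frac m2+c_0\sum_{v}\sqrt{d(v)}-\frac mn\,\mathbb{E}\,\bigl||A|-|B|\bigr|\ \ge\ \frac m2+c_0\sum_{v}\sqrt{d(v)}-O_p\!\bigl(m\sqrt{\Delta/n}\bigr),
\]
so for $p=p(C)$ sufficiently small the error term is at most $2m\sqrt{\Delta/n}$ while $c_0>0$; some outcome of the construction then attains the claimed bound, and isolated vertices (which affect neither $m$ nor $\sum\sqrt{d(v)}$) may be allotted at the end to equalise the parts. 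The main obstacle is the imbalance estimate: since $\mu_v$ is only piecewise constant, Lipschitz arguments must be replaced by discrete-influence and anti-concentration bounds, and the resulting pairwise correlations have to be summed against the degree sequence just tightly enough --- through the displayed inequality --- to fall within the target error $2m\sqrt{\Delta/n}$; reconciling this with a strictly positive gain $c_0$ then only requires $p$ to be small in terms of $C$.
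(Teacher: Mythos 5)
There is a fundamental mismatch: the statement you were asked to prove is the Naor--Verstra\"ete extremal bound, an \emph{upper} bound on the number of edges of a $C_{2k}$-free bipartite graph with parts of sizes $a\le b$, namely $e(G)\le 2k\bigl((ab)^{(k+1)/(2k)}+a+b\bigr)$ for odd $k$ and $e(G)\le 2k\bigl(a^{(k+2)/(2k)}b^{1/2}+a+b\bigr)$ for even $k$. Your proposal never engages with this quantity at all: it constructs a random two-phase colouring, controls the expected cut size and the imbalance, and rebalances to obtain a bisection of size at least $m/2+c_0\sum_v\sqrt{d(v)}-O\bigl(m\sqrt{\Delta/n}\bigr)$. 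That is an attempt at Theorem \ref{max bisection} (the bisection version of Shearer's bound for graphs with sparse neighbourhoods), not at the stated theorem. Nowhere do you use the bipartition into classes of sizes $a$ and $b$, the exclusion of $C_{2k}$, or derive any upper bound on $e(G)$; the objects you estimate (cut sizes, $\sum_v\sqrt{d(v)}$, $\mathbb{E}\,||A|-|B||$) have no bearing on the claimed inequality. As a proof of the statement in question, the argument therefore does not get off the ground.

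For context: the paper itself does not prove this statement either; it is imported from Naor and Verstra\"ete \cite{NV2005}, whose proof is a combinatorial/BFS-type argument bounding edges between distance levels in an even-cycle-free bipartite graph, entirely different in nature from any randomised partitioning scheme. If your intention was to prove Theorem \ref{max bisection}, note also that the paper's route is not Shearer-style recolouring but semidefinite-programming-inspired vector rounding (Lemma \ref{sdp lemma}, following Goemans--Williamson and R\"aty--Tomon), with the imbalance controlled through a second-moment computation on the random hyperplane cut; your recolouring-based imbalance analysis (discrete influences, anti-concentration, correlation sums) would need full proofs of the estimates you only sketch before it could serve as an alternative.
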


Next, we list several technical lemmas on Max-Bisections. The following is a direct corollary of a result given by Lee, Loh and Sudakov \cite{LLS2013}.
\begin{theorem}[Lee, Loh and Sudakov \cite{LLS2013}]\label{Lee Loh Sudakov JCTB 13}
Every graph with $n$ vertices, $m$ edges,  maximum degree $\Delta$ and without isolated vertices has a bisection of size at least
\begin{align*}
\frac m2 +\frac{n - \max\{n/3, \Delta - 1\}}4 .
\end{align*}
\end{theorem}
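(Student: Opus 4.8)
\textbf{Proof proposal for Theorem~\ref{Lee Loh Sudakov JCTB 13}.}

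The plan is to deduce this statement as a direct corollary of the main bisection result of Lee, Loh and Sudakov \cite{LLS2013}, whose original formulation is phrased in terms of judicious partitions rather than a single clean bisection bound. First I would recall the relevant theorem from \cite{LLS2013}: every graph $G$ with $n$ vertices and $m$ edges without isolated vertices admits a bipartition $V(G)=A\cup B$ with $\big||A|-|B|\big|\le 1$ and
\begin{align*}
e(A,B)\ \ge\ \frac m2+\frac{n-\max\{n/3,\Delta-1\}}{4},
\end{align*}
which is exactly the asserted inequality once one unwraps the notion of a cut into $e(A,B)$. So the substance of the proof is really just a translation: the "bisection of size at least $\cdot$" in our statement is precisely a balanced bipartition $(A,B)$ with $e(A,B)$ at least that quantity, and the hypothesis ``maximum degree $\Delta$ and without isolated vertices'' matches the hypotheses in \cite{LLS2013} verbatim.

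If instead a self-contained argument is wanted, I would run the standard local-switching / probabilistic approach. Start from a uniformly random balanced bipartition $(A,B)$ (choosing $\lceil n/2\rceil$ vertices for $A$ uniformly at random). Its expected size is $\tfrac m2\cdot\tfrac{n}{n-1}>\tfrac m2$, but this only gives the leading term; to extract the additive $\Theta(n)$ improvement one argues that if a balanced bipartition is \emph{locally optimal} — meaning no swap of a pair $a\in A$, $b\in B$ increases $e(A,B)$ — then for every vertex $v$ at least half of its neighbours (rounded suitably) lie on the opposite side. Summing the resulting ``each vertex contributes'' bound over all $n$ vertices and carefully accounting for the pairs that must stay fixed in a balanced partition yields $e(A,B)\ge \tfrac m2 + \tfrac{n-\max\{n/3,\Delta-1\}}{4}$; the $\max$ term arises because a vertex of very large degree, or the parity/balance constraint, can obstruct the swap at up to $\max\{n/3,\Delta-1\}$ vertices. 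Finally, to pass from a local optimum among balanced bipartitions to the theorem one notes that a maximum-size balanced bipartition is in particular locally optimal, so such a partition exists.

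The main obstacle — and the reason \cite{LLS2013} is nontrivial — is the balance constraint in the switching argument: in an unbalanced setting one may freely move a single misplaced vertex, but for bisections one must swap pairs, and a vertex may be ``blocked'' whenever every potential partner on the other side would itself lose more edges than $v$ gains. Handling this requires a weighting/averaging over which vertex plays the role of the partner, and it is precisely this averaging that produces the $\max\{n/3,\Delta-1\}$ loss rather than the naive $\Delta$. For our purposes, however, I would simply cite \cite{LLS2013}: the statement as worded is an immediate restatement of their result, so no new argument is needed here, and the theorem is invoked only as a black box in the later sections (for instance to dispose of graphs of small maximum degree relative to $n$).
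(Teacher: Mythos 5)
Your proposal handles this exactly as the paper does: the paper states this result as a known theorem (a direct corollary of the main result of Lee, Loh and Sudakov \cite{LLS2013}) and gives no proof, so citing \cite{LLS2013} as a black box is the intended and sufficient justification. Your auxiliary local-switching sketch is only a heuristic outline and would not by itself establish the precise $\max\{n/3,\Delta-1\}$ term, but nothing in the paper relies on reproving it, so this does not matter.
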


Hou and Yan \cite{HY2020} established an improved lower bound for Max-Bisections of $C_4$-free graphs, strengthening a previous result of Fan, Hou and Yu \cite{FHY2018}.

\begin{theorem}[Hou and Yan \cite{HY2020}]\label{Fan Hou Yu CPC18}
Every connected $C_4$-free graph $G$ with $n$ vertices, $m$ edges and minimum degree at least 2 admits a bisection of size at least $m/2+(n-1)/4.$
\end{theorem}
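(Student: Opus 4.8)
The plan is to argue by induction on the number of edges, taking a minimal counterexample $G$ (smallest $m$, then smallest $n$), and to measure a bipartition $(A,B)$ through its \emph{surplus}: writing $f_{(A,B)}(v)=d(v)-2\,|N(v)\cap X|$ where $X\in\{A,B\}$ is the part containing $v$, one has the identity $e(A,B)=m/2+\tfrac14\sum_{v\in V(G)}f_{(A,B)}(v)$, so the goal becomes a bisection with $\sum_v f(v)\ge n-1$. The first move is to reduce to $2$-connected $G$: if $v$ is a cut vertex, split $G$ along $v$ into the pieces $G_1,\dots,G_t$ (each $G_i$ being a component of $G-v$ together with $v$), note each $G_i$ is again $C_4$-free, make each satisfy $\delta\ge 2$ after a little surgery (a piece in which $v$ has degree $1$ is either merged into a neighbouring piece or dealt with directly), apply induction to each, and reassemble the bisections along $v$. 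Here the cut size adds up correctly — $\sum_i\bigl(m_i/2+n_i/4\bigr)=m/2+(n-1)/4$ — but preserving \emph{balance} is not automatic: aligning every piece so that $v$ falls on side $A$ can make $|A|-|B|$ as small as $1-2t$. So one actually needs the stronger rooted statement that each $G_i$ admits, for a prescribed side of $v$ and prescribed part sizes with difference at most $1$, a bipartition of size at least $m_i/2+n_i/4$, and then chooses the orientations of the pieces greedily to keep the total imbalance within $1$. I expect this bookkeeping to be one of the two main difficulties.

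For $2$-connected $G$ every vertex is non-cut and $G$ is full of short cycles; now I would delete one carefully chosen object — a degree-$2$ vertex $v$ lying on a triangle, or an edge on a shortest cycle — so that $G$ minus that object is still connected and hence, by minimality, has a bisection meeting the bound for its own parameters, and then put the object back. Reinserting a vertex $v$, one places it on the side forced by the balance constraint (a free choice exists only when $n$ is odd), cutting $|N(v)\cap(\text{opposite side})|$ of its edges; reinserting an edge of a cycle, it contributes $1$ when it is cut. Naively this only delivers size at least $m/2+(n-2)/4$, short of the target by exactly $1/4$, so the heart of the matter is to recover that last quarter-edge by a parity/orientation argument: one wants the deleted object to be such that \emph{either} a deleted vertex has odd degree (gaining $1/2$ from $\lceil d(v)/2\rceil$ over $d(v)/2$), \emph{or} a deleted edge is forced to be cut in a maximum bisection of the smaller graph, \emph{or} $v$'s neighbours can be herded onto the larger side. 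This is exactly where the hypotheses are used: $\delta(G)\ge 2$ excludes the star $K_{1,n-1}$, which really does violate the bound, while $C_4$-freeness — through the facts that every $G[N(v)]$ is a matching and any two vertices have at most one common neighbour — controls how degrees fall upon deletion and excludes the complete-bipartite obstruction $K_{2,n-2}$.

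The crux, and the step I expect to be hardest, is the tension between keeping the bisection exactly balanced and extracting the sharp constant $1/4$: local swaps of a maximum bisection alone are useless here (they only give $f(v)+f(w)\ge 2\,[vw\in E]$ for $v\in A$, $w\in B$, hence merely $\sum_v f(v)\ge 0$), so the connectivity bonus has to come either from the deletion–reinsertion induction above or from ``path/rotation'' moves that shift more than one vertex at a time while preserving balance — and either way one must simultaneously maintain balance and pin down the final $1/4$, which is precisely what $\delta\ge 2$ and $C_4$-freeness make possible. A further, more mundane complication is the parity of $n$: the reinserted vertex has a free choice of side when $n$ is odd but not when $n$ is even, so the argument will split on this throughout.
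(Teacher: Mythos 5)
This statement is quoted in the paper as a known external result (Hou and Yan \cite{HY2020}); the paper gives no proof of it, so the only question is whether your argument stands on its own — and as written it does not. What you have is a plan that explicitly defers the two steps that constitute the theorem: the rooted/balance bookkeeping when reassembling pieces at a cut vertex, and the recovery of the final $1/4$ after a deletion--reinsertion step. Naming these as ``the main difficulties'' is an accurate diagnosis, but no mechanism is supplied for either, and the second one is not a quarter-edge bookkeeping issue only: when $n-1$ is odd the reinserted vertex $v$ is \emph{forced} onto the smaller side, where it may cut as few as $0$ of its edges, so the naive accounting does not even yield $m/2+(n-2)/4$; you would need to control where $N(v)$ sits in an optimal bisection of $G-v$, which is exactly the content of the theorem.

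A second, structural problem is that your induction hypothesis requires connectivity and $\delta\ge 2$, but none of the proposed reductions preserves these: deleting a degree-$2$ vertex, or an edge of a shortest cycle, from a $2$-connected $C_4$-free graph can create vertices of degree $1$ (indeed every neighbour of a deleted vertex of degree $2$ may drop to degree $1$), so the minimal-counterexample hypothesis cannot be invoked on $G-v$ or $G-e$. (The published proofs of this bound and of its predecessor in \cite{FHY2018} spend most of their effort on precisely this point, handling the pendant structures created by such reductions with more elaborate local operations.) There are also smaller inaccuracies — a $2$-connected $C_4$-free graph need not contain a triangle, let alone a degree-$2$ vertex on one, and the girth can be arbitrarily large — so the ``carefully chosen object'' you intend to delete may not exist. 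In short, the identity $e(A,B)=m/2+\tfrac14\sum_v f(v)$ and the reduction to $\sum_v f(v)\ge n-1$ are fine, but everything after that is an outline of difficulties rather than a proof of the statement.
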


We end this section with the following intuitive lemma.

\begin{lemma}\label{max bisection subgraph}
Let $G$ be a graph with $m$ edges and $S\subseteq V(H)$. If $G[S]$ has a bisection of size $e(S)/2+x$, then $G$ has a bisection with size at least $m/2+x-\sqrt{8m}$.
\end{lemma}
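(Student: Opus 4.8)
The plan is to take a bisection of $G[S]$ witnessing the bound $e(S)/2+x$, extend it greedily to a bipartition of all of $V(G)$, and then fix up the balance by swapping a small number of vertices. Write $(A_0,B_0)$ for the given bisection of $G[S]$, so $e_{G[S]}(A_0,B_0)\ge e(S)/2+x$ and $\bigl||A_0|-|B_0|\bigr|\le 1$. First I would process the vertices of $V(G)\setminus S$ one at a time in an arbitrary order, placing each new vertex $v$ into whichever side currently has fewer neighbors of $v$ among the already-placed vertices (breaking ties toward the smaller side). This classical greedy step guarantees that each vertex $v$ contributes at least half of its back-edges to the cut, so the resulting bipartition $(A,B)$ of $V(G)$ satisfies
\begin{align*}
e_G(A,B)\ \ge\ e_{G[S]}(A_0,B_0)+\tfrac12\bigl(m-e(S)\bigr)\ \ge\ \frac m2+x.
\end{align*}

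The bipartition $(A,B)$ need not be balanced, so the second step is to rebalance it while losing few cut edges. Suppose without loss of generality $|A|\ge |B|$, and let $t=\lfloor(|A|-|B|)/2\rfloor$ be the number of vertices we must move from $A$ to $B$ to reach a bisection. I would move the $t$ vertices of $A$ having the smallest degree in $G$; moving a vertex of degree $d$ changes the cut size by at least $-d$, so the total loss is at most the sum of the $t$ smallest degrees in $G$. The key quantitative point is that $t$ such vertices span at most $\sqrt m$ edge-endpoints' worth of degree: more precisely, if $d_1\le d_2\le\cdots$ are the smallest degrees then $\sum_{i\le t} d_i \le 2\sqrt m$. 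One clean way to see a bound of this shape is to note that $t\le n/2$ and the smallest $t$ degrees sum to at most $\sqrt{m}$-ish by an averaging/rank argument — I would give the short computation showing $\sum_{i\le t} d_i\le 2\sqrt m$ (e.g. either few vertices have degree exceeding $\sqrt m$ and we avoid them, or else $m$ is large enough that the bound is automatic).

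After the rebalancing the resulting partition $(A',B')$ is a bisection of $G$ with
\begin{align*}
e_G(A',B')\ \ge\ e_G(A,B)-2\sqrt m\ \ge\ \frac m2+x-2\sqrt m,
\end{align*}
which is exactly the claimed bound. The only delicate point — and the one I'd expect to be the main obstacle — is pinning down the "$\le 2\sqrt m$" estimate for the cut-edges sacrificed during rebalancing in a way that holds uniformly (in particular without a minimum-degree or connectivity hypothesis, since none is assumed here): one must argue that we can always select the vertices to move so that their total degree, and hence the number of cut edges incident to them, is $O(\sqrt m)$. Everything else is the standard greedy-extension-plus-swap argument. (I also note the harmless typo "$S\subseteq V(H)$'' should read "$S\subseteq V(G)$''.)
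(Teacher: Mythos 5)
There is a genuine gap, and it is exactly at the point you yourself flag as ``the main obstacle'': the claim that the rebalancing costs only $O(\sqrt m)$ cut edges. Your greedy extension gives no control whatsoever on the sizes of $A$ and $B$: if, say, $V(G)\setminus S$ is large and every vertex of it happens to prefer the same side, then $t=\lfloor(|A|-|B|)/2\rfloor$ can be $\Theta(n)$, and in a near-regular graph the sum of the $t$ smallest degrees is then $\Theta(m)$, not $O(\sqrt m)$. So the inequality $\sum_{i\le t}d_i\le 2\sqrt m$ is simply false in general, and no ``averaging/rank argument'' can rescue it; the loss from rebalancing a linearly unbalanced cut can swamp the entire gain $x$. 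The statement of the lemma is only saved by ensuring that the partition you build is \emph{already} nearly balanced before any vertex is moved.

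That is what the paper does differently: instead of extending $(A_0,B_0)$ greedily vertex by vertex, it takes a bisection $(A_2,B_2)$ of $G[T]$, $T=V(G)\setminus S$, of size at least $e(T)/2$ (such a bisection always exists, e.g.\ by Theorem \ref{Lee Loh Sudakov JCTB 13} or a random balanced partition), and then chooses between the two pairings $A_1\cup A_2,\,B_1\cup B_2$ and $A_1\cup B_2,\,B_1\cup A_2$, one of which captures at least half of the $S$--$T$ edges. Since both pieces were bisections, the combined partition has $\bigl||A|-|B|\bigr|\le 2$, so at most one vertex needs to be moved, and one can pick it of degree at most $2\sqrt m$; this is where the $-2\sqrt m$ term comes from. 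Your first step (the $m/2+x$ lower bound on the unbalanced cut) is fine, but the argument cannot be completed along the lines you propose without replacing the greedy extension by a balanced one.
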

    \begin{proof}[\bf Proof]
    Denote $T=V(G)\setminus S$. Let $(A_1,B_1)$ be a bisection of $G[S]$ with size $e(G[S])/2+x$. By Theorem \ref{Lee Loh Sudakov JCTB 13}, $G[T]$ has a bisection $(A_2, B_2)$ of size at least $e(T)/2$. Note that $e(A_1,B_2)+e(B_1,A_2)+e(A_1,A_2)+e(B_1,B_2)=e(S,T)$. One of $e(A_1,B_2)+e(B_1,A_2)$ and $e(A_1,A_2)+e(B_1,B_2)$ has value at least $e(S,T)/2$, say
    \begin{align*}
    e(A_1,B_2)+e(B_1,A_2)\ge e(S,T)/2.
    \end{align*}
    Let $A=A_1\cup A_2$ and $B=B_1\cup B_2$. Then $(A,B)$ is a bipartition of $G$ with size at least
    \begin{align*}
    e(S)/2+x+e(T)/2+e(S,T)/2\ge m/2+x
    \end{align*}
    and $\left||A|-|B|\right|\le2$. If $\left||A|-|B|\right|\le1$, then we are done. Otherwise, we pick a vertex with degree at most $\sqrt{8m}$ from the larger part to the other part and we obtain a bisection whose size decreases at most $\sqrt{8m}$ compared to $e(A,B)$. It is possible as there are at least $n/2$ vertices whose degree is at most $\sqrt{8m}$. This completes the proof of the lemma.
    \end{proof}

\section{Proof of Theorem \ref{max bisection} }\label{SEC:pf-thm-sparse}

\subsection{A technical lemma}\label{Semi-definite programming}

Inspired by the celebrated approximation algorithm of Goemans and Williamson \cite{GW1995}, Carlson, Kolla, Li, Mani, Sudakov and Trevisan \cite{CKL2021} proposed an intuitive approach to study the Max-Cut of sparse $H$-free graphs by semidefinite programming. The following is crucial in their analysis.

\begin{lemma}[Glock, Janzer and Sudakov \cite{Glock2023}]\label{LEM:Max-Cut-sdp-emma}
Let $G$ be a graph with $m$ edges and let $N$ be a positive integer.
Then, for any set of non-zero vectors $\{\mathbf x^v:v\in V(G)\}\subset\mathbb R^N$, $G$ has a cut of size at least
\[
\frac{m}{2}-\frac1{\pi}\sum_{uv\in E(G)}\arcsin\left(\frac{\langle\mathbf x^u,\mathbf x^v\rangle}{\|\mathbf x^u\|\|\mathbf x^v\|}\right).
\]
\end{lemma}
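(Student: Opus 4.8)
The plan is to run the random hyperplane rounding of Goemans and Williamson. First I would normalize: since every $\mathbf x^v$ is non-zero, replacing $\mathbf x^v$ by $\mathbf x^v/\|\mathbf x^v\|$ leaves both the ratio $\langle\mathbf x^u,\mathbf x^v\rangle/(\|\mathbf x^u\|\|\mathbf x^v\|)$ and the desired conclusion unchanged, so I may assume $\|\mathbf x^v\|=1$ for every $v$. Write $\rho_{uv}=\langle\mathbf x^u,\mathbf x^v\rangle\in[-1,1]$ and let $\theta_{uv}=\arccos(\rho_{uv})\in[0,\pi]$ be the angle between $\mathbf x^u$ and $\mathbf x^v$.

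Next I would draw a uniformly random unit vector $\mathbf r\in\mathbb R^N$ (equivalently, a standard Gaussian vector, normalized) and set $A=\{v\in V(G):\langle\mathbf x^v,\mathbf r\rangle\ge 0\}$ and $B=V(G)\setminus A$, breaking the probability-zero ties arbitrarily. The crux is the claim that each edge $uv$ is cut by $(A,B)$ with probability exactly $\theta_{uv}/\pi$. To prove it, project $\mathbf r$ onto the (at most two-dimensional) subspace spanned by $\mathbf x^u,\mathbf x^v$; by the rotational symmetry of the distribution of $\mathbf r$, the normalized projection is uniform on the unit circle of that plane. The vertex $u$ lies in $A$ precisely when the angular coordinate of this projection lies in the half-circle centered at the direction of $\mathbf x^u$, and likewise for $v$ with its own half-circle, whose center is at angular distance $\theta_{uv}$; the event that exactly one of these occurs is the symmetric difference of two half-circles with centers $\theta_{uv}$ apart, which is a union of two antipodal arcs of total angular measure $2\theta_{uv}$, hence has probability $\theta_{uv}/\pi$.

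Finally, linearity of expectation together with the identity $\arccos t+\arcsin t=\pi/2$ gives
\[
\mathbb E\bigl[e(A,B)\bigr]=\sum_{uv\in E(G)}\frac{\theta_{uv}}{\pi}=\sum_{uv\in E(G)}\frac{\arccos(\rho_{uv})}{\pi}=\frac m2-\frac{1}{\pi}\sum_{uv\in E(G)}\arcsin(\rho_{uv}),
\]
and since at least one outcome of $\mathbf r$ yields a cut of size at least this expectation, $G$ has a cut of the required size. The only step that is more than bookkeeping is the separation-probability computation; everything else is routine, and I would expect the write-up to spend most of its words making that geometric claim precise, in particular disposing of the degenerate case where $\mathbf x^u$ and $\mathbf x^v$ are parallel, in which the probability is $0$ or $1$ according as $\theta_{uv}\in\{0,\pi\}$.
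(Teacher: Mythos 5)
Your proof is correct: it is the standard Goemans--Williamson random-hyperplane rounding, which is precisely the argument behind the cited lemma and is the same computation (edge-cut probability $\tfrac12-\tfrac1\pi\arcsin\langle\mathbf y_u,\mathbf y_v\rangle$ for a uniformly random hyperplane, followed by linearity of expectation) that the paper itself carries out when proving its bisection variant, Lemma \ref{sdp lemma}. No gaps; the normalization step and the treatment of the degenerate parallel case are handled appropriately.
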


Motivated by a recent work of R\"aty and Tomon \cite{RT2024}, we present a bisection variant of Lemma \ref{LEM:Max-Cut-sdp-emma}.

\begin{lemma}\label{sdp lemma}
Let $G$ be a graph with $m$ edges and average degree $d$, and let $N$ be a positive integer. Then, for any set of unit vectors $\{\mathbf y_v:v\in V(G)\}\subset\mathbb R^N$, $G$ has a bisection of size at least
\begin{align*}
\frac m2-\frac1{\pi}\sum_{uv\in E(G)}\arcsin(\langle\mathbf y_u,\mathbf y_v\rangle)-d\sqrt{n+\frac4{\pi}\sum_{\{u,v\}\in {V\choose 2}}\arcsin(\langle\mathbf y_u,\mathbf y_v\rangle)}.
\end{align*}

\end{lemma}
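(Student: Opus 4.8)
The plan is to derive Lemma \ref{sdp lemma} from the Max-Cut version (Lemma \ref{LEM:Max-Cut-sdp-emma}) by a rounding-plus-rebalancing argument analogous to Lemma \ref{max bisection subgraph}, but carried out at the level of the Goemans--Williamson random hyperplane cut so that we keep control of the imbalance. Concretely, I would sample a uniformly random unit vector $\mathbf r\in\mathbb R^N$ (equivalently a standard Gaussian, normalized) and set $A=\{v:\langle\mathbf y_v,\mathbf r\rangle\ge 0\}$, $B=V\setminus A$. The standard computation gives that each edge $uv$ is cut with probability $1-\tfrac1\pi\arccos(\langle\mathbf y_u,\mathbf y_v\rangle)=\tfrac12-\tfrac1\pi\arcsin(\langle\mathbf y_u,\mathbf y_v\rangle)$, so $\mathbb E[e(A,B)]\ge \tfrac m2-\tfrac1\pi\sum_{uv\in E(G)}\arcsin(\langle\mathbf y_u,\mathbf y_v\rangle)$. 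The new ingredient is to simultaneously bound the expected imbalance: writing $X=|A|-|B|=\sum_{v\in V}\mathrm{sgn}(\langle\mathbf y_v,\mathbf r\rangle)$, we have $\mathbb E[X]=0$ and, since $\mathbb E[\mathrm{sgn}(\langle\mathbf y_u,\mathbf r\rangle)\mathrm{sgn}(\langle\mathbf y_v,\mathbf r\rangle)]=1-\tfrac2\pi\arccos(\langle\mathbf y_u,\mathbf y_v\rangle)=\tfrac2\pi\arcsin(\langle\mathbf y_u,\mathbf y_v\rangle)$ for $u\ne v$ and $=1$ for $u=v$, we get
\[
\mathbb E[X^2]=n+\frac2\pi\sum_{u\ne v}\arcsin(\langle\mathbf y_u,\mathbf y_v\rangle)=n+\frac4\pi\sum_{\{u,v\}\in\binom V2}\arcsin(\langle\mathbf y_u,\mathbf y_v\rangle).
\]

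Next I would couple the two estimates. Let $S=\tfrac1\pi\sum_{uv\in E(G)}\arcsin(\langle\mathbf y_u,\mathbf y_v\rangle)$ and $Q=n+\tfrac4\pi\sum_{\{u,v\}}\arcsin(\langle\mathbf y_u,\mathbf y_v\rangle)$, so $\mathbb E[e(A,B)]\ge \tfrac m2-S$ and $\mathbb E[X^2]=Q$. By linearity there is a choice of $\mathbf r$ for which $e(A,B)-\lambda X^2\ge \tfrac m2-S-\lambda Q$ for any fixed $\lambda>0$; choosing $\lambda$ appropriately (of order $d/\sqrt Q$) and using $e(A,B)\ge \tfrac m2-S-\lambda Q$ together with $|X|\le \sqrt{e(A,B)-(\tfrac m2-S-\lambda Q)+\lambda Q}/\sqrt\lambda$ — or, more cleanly, just fixing $\mathbf r$ so that both $e(A,B)\ge \tfrac m2-S-\sqrt Q\cdot(\text{something})$ and $|X|\le c\sqrt Q$ hold; the slickest route is to pick $\mathbf r$ minimizing $\lambda X^2 - e(A,B)$, note $e(A,B)\le m$ is bounded, and read off both $e(A,B)\ge \tfrac m2-S-\lambda Q$ and $\lambda X^2\le m-(\tfrac m2-S)+\lambda Q\le \tfrac{3m}2+\lambda Q$. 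Then I rebalance: moving $t=\lfloor |X|/2\rfloor$ vertices from the larger side to the smaller side, always choosing vertices of degree at most $d$ (at least half the vertices have degree $\le 2e(G)/n = d$... more precisely at least $n/2$ vertices have degree at most $d$ if we define $d=2m/n$ as twice the average, or we just use that the $\lfloor n/2\rfloor$ lowest-degree vertices have degree at most $2m/n\le$ ... I will use that there are enough low-degree vertices to absorb the moves), costs at most $t\cdot d\le \tfrac{|X|}2 d$ edges, yielding a bisection of size at least $\tfrac m2-S-\lambda Q-\tfrac d2|X|$.

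Finally I would optimize the free parameter. With $\lambda X^2\le \tfrac{3m}2+\lambda Q$ we get $|X|\le\sqrt{Q+\tfrac{3m}{2\lambda}}$, hence the bisection has size at least $\tfrac m2-S-\lambda Q-\tfrac d2\sqrt{Q+3m/(2\lambda)}$. Taking $\lambda$ a suitable constant multiple of $\sqrt{Q}/\,?$ — since the target has the form $\tfrac m2-S-d\sqrt Q$, I want $\lambda Q$ and $d\sqrt{Q}$ comparable and the $3m/(2\lambda)$ term under the root to be $O(Q)$, which holds as soon as $\lambda\gtrsim m/Q\ge d^2/(4Q)$ (using $m\le dn\le dQ$ roughly); choosing $\lambda=\Theta(d/\sqrt Q)$ makes $\lambda Q=\Theta(d\sqrt Q)$ and $3m/(2\lambda)=\Theta(m\sqrt Q/d)$, which may exceed $Q$, so more care is needed — I will instead note $m\le \tfrac d2 n\le \tfrac d2 Q$ is false in general since $Q$ could be small, so I will bound $|X|\le n$ trivially when $Q$ is tiny and otherwise use $Q\ge n$ (as $\arcsin$ can be negative this also needs attention: $Q=\mathbb E[X^2]\ge 0$ always, and if $Q<n$ we still have $|X|\le n\le$ ...). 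The cleanest packaging: since $\mathbb E[X^2]=Q$, Cauchy–Schwarz / a union bound gives a single $\mathbf r$ with $e(A,B)\ge \tfrac m2-S$ and $X^2\le 2Q$ failing only with small probability — actually by Markov, $\Pr[X^2\ge 2Q]\le 1/2$ and $\Pr[e(A,B)<\tfrac m2-S]$ can be large, so instead use $\Pr[e(A,B)-(\tfrac m2-S)<-\eta]\le \mathbb E[(\tfrac m2-S-e(A,B))^+]/\eta=0$ in expectation direction; the correct standard fix is to take $\mathbf r$ maximizing $e(A,B)+\mu(Q-X^2)$ for small $\mu>0$, giving $e(A,B)\ge \tfrac m2-S-\mu\,\mathbb E[X^2]=\tfrac m2-S-\mu Q$ wait that's not right either since we need the max to beat the mean of a different quantity. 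I will use: there exists $\mathbf r$ with $e(A,B)+(\tfrac d2)^2(Q-X^2)\ge \mathbb E[\cdot]=\tfrac m2-S$, whence $e(A,B)\ge \tfrac m2-S-(\tfrac d2)^2 X^2+(\tfrac d2)^2 Q-(\tfrac d2)^2Q$... The honest statement is: pick $\mathbf r$ with $e(A,B)-\tfrac d2|X|\ge \mathbb E[e(A,B)]-\tfrac d2\mathbb E|X|\ge \tfrac m2-S-\tfrac d2\sqrt{\mathbb E[X^2]}=\tfrac m2-S-\tfrac d2\sqrt Q$ by linearity of expectation applied to the random variable $e(A,B)-\tfrac d2|X|$ and Jensen on $\mathbb E|X|\le\sqrt{\mathbb E X^2}$; then rebalance as above losing another $\tfrac d2|X|\le\tfrac d2\sqrt{?}$ — and here the rebalancing cost for that same $\mathbf r$ is at most $\tfrac d2\cdot\tfrac{|X|}{?}$, but we have already spent $|X|$ once; combining, the final bisection size is at least $\tfrac m2-S-\tfrac d2\sqrt Q-\tfrac d2|X|$, and for this $\mathbf r$ we only know $e(A,B)-\tfrac d2|X|$ is large, not that $|X|$ itself is small.

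The main obstacle — and the part I expect to require the most care — is exactly this simultaneous control: a single draw of $\mathbf r$ must give both a large cut and a small imbalance. The resolution I will adopt is to apply linearity of expectation to the single random variable $Z:=e(A,B)-\tfrac d2|X|$, whose value after rebalancing is a valid bisection of size at least $Z$ (moving $\lceil|X|/2\rceil$ low-degree vertices costs at most $\tfrac d2|X|$ edges, using that at least $n/2$ vertices have degree $\le 2m/n\le d$ since $d$ is the average degree — if $|X|>n/2$ this is vacuous but then $Z$ is already very negative and the bound is trivial as $m/2-S-d\sqrt Q$ can be taken $\le 0$ in that regime via $\sqrt Q\ge\sqrt n$... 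I will instead just say: if the claimed lower bound is negative the lemma is trivial by taking an arbitrary bisection, so assume it is nonnegative, which forces $Q$ and hence $n$ moderately large relative to $m$, ensuring enough low-degree vertices). Then $\mathbb E[Z]\ge \tfrac m2-S-\tfrac d2\mathbb E|X|\ge \tfrac m2-S-\tfrac d2\sqrt{\mathbb E[X^2]}=\tfrac m2-S-\tfrac d2\sqrt{Q}$, so some $\mathbf r$ achieves $Z\ge \tfrac m2-S-\tfrac d2\sqrt Q$, and rebalancing that $\mathbf r$'s partition yields a bisection of size at least $\tfrac m2-S-\tfrac d2\sqrt Q\ge \tfrac m2-S-d\sqrt Q$, which is the claimed bound. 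The only remaining subtlety is that the rebalancing cost bound $\tfrac d2|X|$ must be justified against the \emph{specific} $|X|$ for the chosen $\mathbf r$, but that is automatic since the definition of $Z$ already subtracts $\tfrac d2|X|$ for that same $\mathbf r$; I will write this out carefully, handling the low-degree-vertex supply and the trivial regime, and that constitutes the complete proof.
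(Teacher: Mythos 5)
Your final argument---applying linearity of expectation to the single random variable $e(A,B)-c\,d\,\big||A|-|B|\big|$ obtained after rebalancing with low-degree vertices, computing $\mathbb E[(|A|-|B|)^2]=n+\frac4\pi\sum_{\{u,v\}}\arcsin(\langle\mathbf y_u,\mathbf y_v\rangle)$ via the sign-correlation identity, and bounding $\mathbb E\bigl[\big||A|-|B|\big|\bigr]\le\sqrt{\mathbb E[(|A|-|B|)^2]}$ by Jensen---is exactly the proof in the paper, and it is correct. The one slip is that Markov's inequality only guarantees at least $n/2$ vertices of degree at most $2d$ (not $d$), so the rebalancing cost is $d\big||A|-|B|\big|$ rather than $\tfrac d2\big||A|-|B|\big|$; the factor-of-two slack you left in your last inequality absorbs this and still yields the stated bound $\tfrac m2-S-d\sqrt Q$.
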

\begin{proof}[\bf Proof]

Suppose that $G$ is a graph with $n$ vertices, $m$ edges and average degree $d$. For convenience, set $V\coloneqq V(G)$.  Let $\mathbf w$ be a uniformly distributed random unit vector in $\mathbb R^{N}$.  Let $X=\{v\in V:\langle\mathbf w,\mathbf y_v\rangle\ge0\}$ and $Y=V\setminus X$. Then $(X,Y)$ be a random bipartition of $G$. Note that there are at least $n/2$ vertices with degree at most $2d$. So, we can choose $||X|-\lfloor n/2\rfloor|$ vertices with degrees at most $2d$ in the part with larger size, and move them to part with smaller size. This results in a random bisection $(A, B)$ of $G$ with $e(A,B)\ge e(X,Y)-d\left|2|X|-n\right|$. Thus,
\begin{align}\label{EQ:ex-e(A,B)-semi}
\mathbb E[e(A,B)]\ge \mathbb E[e(X,Y)]-d\;\mathbb E[|2|X|-n|].
\end{align}

We now proceed to evaluate each term on the right-hand side of  \eqref{EQ:ex-e(A,B)-semi}.  For a pair $\{u,v\}\in {V\choose 2}$, we have
\begin{align}\label{EQ:pro-uv-same-part-semi}
\text{Pr}[u,v\in X]=\text{Pr}[u,v\in Y]=\frac{\pi-\arccos(\langle\mathbf y_u,\mathbf y_v\rangle)}{2\pi}=\frac14+\frac1{2\pi} \arcsin(\langle\mathbf y_u,\mathbf y_v\rangle)
\end{align}
    and then
\begin{align*}
\text{Pr}[\text{$u,v$ belong to different parts under $(X,Y)$}]=\frac12-\frac1{\pi} \arcsin(\langle\mathbf y_u,\mathbf y_v\rangle).
\end{align*}
By the linearity of expectation, we have
\begin{align}\label{EQ:exp-e(X,Y)-semi}
\mathbb E[e(X,Y)]=\sum_{uv\in E(G)}\text{Pr}[\text{$uv$ is a cut edge of $(X,Y)$}]=\frac {m}2-\frac1{\pi}\sum_{uv\in E(G)}\arcsin(\langle\mathbf y_u,\mathbf y_v\rangle).
\end{align}

To bound $\mathbb E[|2|X|-n|]$,  notice that $\mathbb E[|X|]=n/2$ and
\begin{align}\label{EQ:upper-bound-exp-2|X|-n-semi}
\mathbb E[|2|X|-n|]\le\sqrt{\mathbb E(2|X|-n)^2}=\sqrt{4\mathbb E[|X|^2]-4\mathbb E[|X|]n+n^2}=\sqrt{4\mathbb E[|X|^2]-n^2}.
\end{align}
For $v\in V$, let $\textbf{1}_{v\in X}$ be the indicator random variable of the event $v\in X$. By \eqref{EQ:pro-uv-same-part-semi}, we have
    \begin{align*}
     \mathbb E[|X|^2]&=\mathbb E\left[\left(\sum_{v\in V}\textbf{1}_{v\in X}\right)^2\right]=\sum_{(u,v)\in V^2}\mathbb E[\textbf{1}_{u\in X}\textbf{1}_{v\in X}]\\
        &=\frac{n}{2}+\sum_{(u, v)\in  {V\choose 2}}\text{Pr}[u,v\in X]\\
        &=\frac{n}{2}+2\sum_{\{u,v\}\in {V\choose 2}}\left(\frac14+\frac1{2\pi}\arcsin(\langle\mathbf y_u,\mathbf y_v\rangle)\right)\\
        &=\frac{n^2}4+\frac n4+\frac1{\pi}\sum_{\{u,v\}\in {V\choose 2}}\arcsin(\langle\mathbf y_u,\mathbf y_v\rangle).
    \end{align*}
Substituting the above into  \eqref{EQ:upper-bound-exp-2|X|-n-semi} gives
    \begin{align*}
    \mathbb E[|2|X|-n|]\le\sqrt{n+\frac4{\pi}\sum_{\{u,v\}\in {V\choose 2}}\arcsin(\langle\mathbf y_u,\mathbf y_v\rangle)}.
    \end{align*}
This together with  \eqref{EQ:ex-e(A,B)-semi} and \eqref{EQ:exp-e(X,Y)-semi} establishes that
    \begin{align*}
    \mathbb E[e(A,B)]\ge\frac m2-\frac1{\pi}\sum_{uv\in E(G)}\arcsin(\langle\mathbf y_u,\mathbf y_v\rangle)-d\sqrt{n+\frac4{\pi}\sum_{\{u,v\}\in {V\choose 2}}\arcsin(\langle\mathbf y_u,\mathbf y_v\rangle)},
    \end{align*}
    completing the proof.
\end{proof}

\subsection{Proof of Theorem \ref{max bisection}}
In this subsection, we present a proof of Theorem \ref{max bisection} using Lemma \ref{sdp lemma}.
Let $G$ be a graph with $n$ vertices, $m$ edges and the maximum degree $\Delta$. Suppose that for every vertex $v\in V(G)$, we have $e(G[N(v)])\le C d(v)^{3/2}$.
Let $\gamma>0$ (to be chosen later). For $v\in V(G)$, define
\begin{align*}
\mathbf x_v(u)=
\begin{cases}
    1,&u=v;\\
    -\frac{\gamma}{\sqrt{d(v)}},&u\in N(v);\\
    0,&\text{otherwise.}
\end{cases}
\end{align*}
Then $\|\mathbf x_v\|^2=1+\gamma^2$. Let $d(u,v)=|N(u)\cap N(v)|$. For $\{u,v\}\in {V\choose 2}$, if $uv\in E(G)$, then
\begin{align}\label{EQ:inner-xu-xv-edge}
\langle\mathbf x_u,\mathbf x_v\rangle=-\frac\gamma{\sqrt{d(u)}}-\frac\gamma{\sqrt{d(v)}}+\frac{\gamma^2 d(u,v)}{\sqrt{d(u)d(v)}};
\end{align}
otherwise,
\begin{align}\label{EQ:inner-xu-xv-nonedge}
\langle\mathbf x_u,\mathbf x_v\rangle=\frac{\gamma^2 d(u,v)}{\sqrt{d(u)d(v)}}.
\end{align}
Let
\[
\mathbf y_v=\frac{\mathbf x_v}{\|\mathbf x_v\|}=\frac{\mathbf x_v}{\sqrt{1+\gamma^2}}.
\]
Note that for any $x\in[-1,1]$ with $x\le b-a$ for some $a,b\ge0$, we have $\arcsin(x)\le\frac{\pi}2b-a$. This is because if $x < 0$, then $\arcsin(x)\le x\le b-a\le\frac{\pi}2b-a$; otherwise, $\arcsin(x)\le\frac{\pi}2x\le\frac{\pi}2(b-a)\le\frac{\pi}2b-a$.
If $uv\in E(G)$, then by \eqref{EQ:inner-xu-xv-edge}
\begin{align}\label{sum uvE}
\arcsin(\langle\mathbf y_u,\mathbf y_v\rangle)\le-\frac{\gamma}{1+\gamma^2}\left(\frac1{\sqrt{d(u)}}+\frac1{\sqrt{d(v)}}\right)+\frac{\pi \gamma^2d(u,v)}{2(1+\gamma^2)\sqrt{d(u)d(v)}};
\end{align}
otherwise, by \eqref{EQ:inner-xu-xv-nonedge}
\begin{align}\label{sum uvV}
\arcsin(\langle\mathbf y_u,\mathbf y_v\rangle)\le \frac{\pi \gamma^2d(u,v)}{2(1+\gamma^2)\sqrt{d(u)d(v)}}.
\end{align}
The summation of  $\frac{d(u,v)}{\sqrt{d(u)d(v)}}$ across all edges $uv$ in $G$ gives
\begin{align*}
\sum_{uv\in E(G)}\frac{d(u,v)}{\sqrt{d(u)d(v)}}&\le\frac12\sum_{uv\in E(G)}\left(\frac{d(u,v)}{d(u)}+\frac{d(u,v)}{d(v)}\right)\\
&=\frac12\sum_{u\in V(G)}\sum_{v\in N(u)}\frac{d(u,v)}{d(u)}\\
&=\sum_{u\in V(G)}\frac{e(G[N(u)])}{d(u)}.
\end{align*}
Recall that $e(G[N(u)])\le C d(u)^{3/2}$. Consequently,
\begin{align}\label{EQ:sum-edge-degree-semi}
\sum_{uv\in E(G)}\frac{d(u,v)}{\sqrt{d(u)d(v)}}\le C\sum_{v\in V(G)}\sqrt{d(v)}.
\end{align}
Similarly, summing  $\frac{d(u,v)}{\sqrt{d(u)d(v)}}$ over all pairs $uv$ gives
\begin{align*}
\sum_{\{u,v\}\in {V\choose 2}}\frac{d(u,v)}{\sqrt{d(u)d(v)}}\le\frac12\sum_{u\in V(G)}\sum_{v\in V(G)\setminus\{u\}}\frac{d(u,v)}{d(u)}.
\end{align*}
Observe that the sum $\sum_{v\in V(G)\setminus\{u\}}d(u,v)$ enumerates all paths of length 2 with an endpoint $u$. Thus,
\begin{align*}
\sum_{v\in V(G)\setminus\{u\}}d(u,v)=\sum_{w\in N(u)}(d(w)-1),
\end{align*}
and then
\begin{align}\label{EQ:sum-nonedge-degree-semi}
\sum_{\{u,v\}\in {V\choose 2}}\frac{d(u,v)}{\sqrt{d(u)d(v)}}&\le\frac12\sum_{u\in V(G)}\sum_{w\in N(u)}\frac{d(w)-1}{d(u)}\notag\\
&\le\frac12n(\Delta-1).
\end{align}
Combining \eqref{sum uvE}, \eqref{sum uvV}, \eqref{EQ:sum-edge-degree-semi} and \eqref{EQ:sum-nonedge-degree-semi}, we conclude that
\begin{align*}
\sum_{uv\in E(G)}\arcsin(\langle\mathbf y_u,\mathbf y_v\rangle)&\le
-\frac{\gamma}{1+\gamma^2}\sum_{v\in V(G)}\sqrt{d(v)}+\frac{C\pi}2\frac{\gamma^2}{1+\gamma^2}\sum_{v\in V(G)}\sqrt{d(v)},
\end{align*}
and
\begin{align*}
    \sum_{\{u,v\}\in {V\choose 2}}\arcsin(\langle\mathbf y_u,\mathbf y_v\rangle)&\le\frac\pi2\frac{\gamma^2}{1+\gamma^2}\sum_{\{u,v\}\in {V\choose 2}}\frac{d(u,v)}{\sqrt{d(u)d(v)}}\\
    &\le\frac\pi4\frac{\gamma^2}{1+\gamma^2}n(\Delta-1).
\end{align*}
So by Lemma \ref{sdp lemma}, $G$ has a bisection with size at least
\begin{align*}
\frac m2+\frac{\gamma}{1+\gamma^2}\left(\frac1{\pi}-\frac C 2\gamma\right)\sum_{v\in V(G)}\sqrt{d(v)}-d\sqrt{\frac{n}{1+\gamma^2}+\frac{\gamma^2}{1+\gamma^2}n\Delta}.
\end{align*}
Choose $\gamma>0$ such that $\frac1{\pi}-\frac C2\gamma\ge\frac1{2\pi}$, completing the proof.

\section{Bisections of $C_{2k}$-free graphs}\label{Bisections of C_2k-free graphs}
In this section, we investigate  bisections of  $C_{2k}$-free graphs for $k\ge3$ and establish the following theorem.

\begin{theorem}\label{THM:Max-bisection-C2k-free-tight}
Let $k\ge3$ be an integer, and let $G$ be a $C_{2k}$-free graph with $m$ edges and minimum degree at least $k$. Then  $G$ has a bisection of size at least
\begin{align*}
m/2+\Omega(m^{(2k+1)/(2k+2)}).
\end{align*}
\end{theorem}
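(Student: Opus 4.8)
\textbf{Proof proposal for Theorem~\ref{THM:Max-bisection-C2k-free-tight}.}

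The plan is to combine Theorem~\ref{max bisection} (the bisection version of the Shearer-type bound for graphs with sparse neighborhoods) with a density/regularization argument, exactly in the spirit of how Theorem~\ref{THM:AKS-thm} is derived from Theorem~\ref{THM:AKS-theorem-sparse}. First I would verify that a $C_{2k}$-free graph has sparse neighborhoods in the required sense: for any vertex $v$, the induced subgraph $G[N(v)]$ has $d(v)$ vertices and, since a $C_{2k}$-free graph whose neighborhood contains many edges would force a short cycle through $v$ together with a long even cycle, one gets $e(G[N(v)]) \le C d(v)^{3/2}$ for an absolute constant $C$ depending only on $k$ (this uses the Bondy--Simonovits bound, Theorem~\ref{c2k turan}, applied to $G[N(v)]$, which is itself $C_{2k-2}$-free, or a direct counting argument; for $k=3$ one even gets that $G[N(v)]$ is a union of paths). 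Once this is in place, Theorem~\ref{max bisection} yields a bisection of size at least $m/2 + c_1\sum_{v}\sqrt{d(v)} - 2m\sqrt{\Delta/n}$.

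The key obstacle is the error term $2m\sqrt{\Delta/n}$: this is only small when $\Delta$ is not much larger than $m^2/n^2$, i.e.\ when the graph is roughly regular. The standard fix, which I would carry out here, is a two-case analysis according to the average degree $d = 2m/n$. If $m$ is small relative to $n$ (say $m \le n^{1+1/k}\cdot$ const, which always holds by Theorem~\ref{c2k turan}, but more precisely if $d$ is bounded by an appropriate power), then the graph is sparse and one can instead appeal to Theorem~\ref{Lee Loh Sudakov JCTB 13} or Theorem~\ref{Fan Hou Yu CPC18}-style bounds giving an $\Omega(n)$ surplus, and since $n \ge m^{k/(k+1)} \gg m^{(2k+1)/(2k+2)}$ in the sparse regime, we are done. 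If $d$ is large, I would pass to a subgraph where the degrees are controlled: delete vertices of degree below $d/2$ iteratively (this removes at most $m/2$ edges and preserves a positive fraction of $m$), obtaining a subgraph $G'$ with minimum degree $\ge d/2$; then by Theorem~\ref{c2k turan} applied to $G'$ its maximum degree cannot be too large relative to its number of vertices, so $\Delta' = O((m')^{1/(k+1)}\cdot(\text{poly}))$ while $n' = \Omega(m'/d)$, making $m'\sqrt{\Delta'/n'}$ of strictly smaller order than $\sum_v \sqrt{d_G(v)} = \Omega(n' \sqrt{d})$. Finally, Lemma~\ref{max bisection subgraph} lets me transfer a good bisection of $G'$ back to $G$ at a cost of only $2\sqrt{m}$.

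To complete the accounting I would show that $\sum_{v\in V(G')}\sqrt{d_{G'}(v)} = \Omega(m^{(2k+1)/(2k+2)})$ in the relevant regime: with $n'$ vertices of degree about $d' \asymp m'/n'$ we have $\sum \sqrt{d'} \asymp n'\sqrt{d'} \asymp \sqrt{m' n'}$, and then the extremal balance between the Bondy--Simonovits upper bound $m' = O(k (n')^{1+1/k})$ and this surplus is optimized, giving precisely the exponent $(2k+1)/(2k+2)$; a short optimization over $d'$ (balancing $\sqrt{m'n'}$ against $m'\sqrt{\Delta'/n'}$ and against $\sqrt{m'}$) pins down the worst case. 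The main difficulty I anticipate is bookkeeping in this optimization and in confirming that the degree-cleaning step simultaneously controls $\Delta'$ (via the even-cycle Tur\'an bound) and keeps $n'$ large enough; the sparse-neighborhood verification and the application of Theorem~\ref{max bisection} itself are routine given what is already established. The tightness claim for $2k \in \{4,6,10\}$ follows from the same incidence-graph constructions (generalized polygons) used to show tightness of Theorem~\ref{THM:AKS-thm}, noting that those extremal graphs are regular and hence have balanced near-optimal cuts, so no loss is incurred in passing from cuts to bisections.
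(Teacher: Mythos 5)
Your overall architecture (sparse-neighborhood bound from Theorem~\ref{max bisection} plus a regularization step) matches the paper's, and the first ingredient is fine: $G[N(v)]$ contains no path on $2k-1$ vertices (such a path would close into a $C_{2k}$ through $v$), so Erd\H{o}s--Gallai gives $e(G[N(v)])\le kd(v)$, which is even stronger than the $Cd(v)^{3/2}$ you need. (Your parenthetical claim that $G[N(v)]$ is $C_{2k-2}$-free is not correct and is not the right route, but it is not needed.) The degeneracy computation showing $\sum_v\sqrt{d(v)}=\Omega(m^{(2k+1)/(2k+2)})$ is also essentially the paper's Lemma~\ref{max bisection with max degree}.

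The genuine gap is in your handling of the error term $2m\sqrt{\Delta/n}$. Iteratively deleting vertices of degree below $d/2$ controls the \emph{minimum} degree of $G'$, but it does nothing to the \emph{maximum} degree: Theorem~\ref{c2k turan} bounds the number of edges of $G'$, not $\Delta'$, and a single vertex of degree $n'-1$ is perfectly compatible with the Bondy--Simonovits bound. So your claim that $\Delta'=O((m')^{1/(k+1)}\cdot\mathrm{poly})$ after cleaning is unjustified, and with $\Delta'$ as large as $n'$ the term $m'\sqrt{\Delta'/n'}$ is of order $m'$, which swamps the surplus. The paper instead splits off the set $A$ of vertices of degree $\ge D\asymp(n/d)^{k/(k+1)}$ and runs a trichotomy: if the low-degree part $B$ retains a constant fraction of the edges, apply the small-maximum-degree lemma to $G[B]$ and transfer back via Lemma~\ref{max bisection subgraph}; otherwise almost all edges lie in $e(A,B)$, and the Naor--Verstra\"ete bipartite Tur\'an bound (Theorem~\ref{c2k bipartite turan}) — an ingredient absent from your proposal — forces $m\le 4kn$. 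Even in that sparse regime your plan is incomplete: Theorem~\ref{Lee Loh Sudakov JCTB 13} gives surplus $\frac{n-\max\{n/3,\Delta-1\}}{4}$, which vanishes when some vertex has degree close to $n$. The paper removes the at most $k-1$ vertices of degree $\ge(1-\frac{1}{2k})n$ (at most $k-1$ because $k$ such vertices would create a $K_{k,n/2}\supseteq C_{2k}$), and this is precisely where the hypothesis $\delta(G)\ge k$ is used — it guarantees the remaining graph has no isolated vertices so that Lee--Loh--Sudakov yields an $\Omega(n)=\Omega(m)$ surplus. Your proposal never identifies where the minimum-degree-$k$ assumption enters, which is the heart of why this theorem improves on the earlier perfect-matching/minimum-degree-2 results.
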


First, we derive the following intuitive result using Theorem \ref{max bisection} and a standard degenerate argument.
\begin{lemma}\label{max bisection with max degree}
Let $k\ge2$ be an integer, and let $G$ be a $C_{2k}$-free graph with $n$ vertices,  $m$ edges and maximum degree $\Delta$. If there is a positive constant $c$ such that $\Delta\le cn/m^{1/(k+1)}$, then $G$ has a bisection of size at least
    \begin{align*}
    m/2+\Omega\left(m^{(2k+1)/(2k+2)}\right).
    \end{align*}
\end{lemma}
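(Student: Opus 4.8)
\textbf{Proof plan for Lemma \ref{max bisection with max degree}.}
The plan is to reduce Lemma \ref{max bisection with max degree} to Theorem \ref{max bisection} by checking that a $C_{2k}$-free graph automatically has the sparse-neighborhood property needed there, and then to show that under the hypothesis $\Delta \le cn/m^{1/(k+1)}$ both the positive term $\sum_v \sqrt{d(v)}$ and the error term $2m\sqrt{\Delta/n}$ are of the right order. First I would deal with the case where $m$ is bounded by an absolute constant, where the statement is trivial since any graph with minimum degree at least $k\ge 2$ has a bisection of size at least $m/2 + \Omega(1)$ by Theorem \ref{Lee Loh Sudakov JCTB 13} (or Theorem \ref{Fan Hou Yu CPC18}); so assume $m$ is large. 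We may also assume $G$ has no isolated vertices (indeed, we only care about the case $\delta(G)\ge k$ in the applications, but even for the stated lemma isolated vertices contribute nothing and can be deleted).

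The first substantive step is to verify the hypothesis of Theorem \ref{max bisection}: for every $v\in V(G)$, the induced subgraph $G[N(v)]$ has at most $Cd(v)^{3/2}$ edges for some absolute constant $C=C(k)$. For $k=2$ this is immediate since $G[N(v)]$ contains no edge at all ($C_4$-freeness forbids even a single edge among neighbours of $v$ together with $v$ creating a $C_3$? no — it forbids two common neighbours, so actually $G[N(v)]$ is a matching... let me just say:) an edge $xy$ inside $N(v)$ together with $v$ gives a triangle, which is harmless, but two independent edges or a path would not immediately give $C_4$; the clean statement is that $G[N(v)]$ is $C_{2k-2}$-free-ish, so I would instead invoke the K\H{o}v\'ari--S\'os--Tur\'an bound (Theorem \ref{KST Theorem}) for $k=2$: if $G[N(v)]$ had a $C_4$ then so would $G$, hence $e(G[N(v)]) \le (1/2+o(1))d(v)^{3/2} \le C d(v)^{3/2}$. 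For $k\ge 3$ one argues that $G[N(v)]$ is $C_{2k-2}$-free (a $(2k-2)$-cycle in $N(v)$ plus $v$ yields a $C_{2k-1}$, not quite $C_{2k}$), so the correct route is: a $C_4$ inside $N(v)$ together with $v$ still only gives a $C_4$; rather, the right observation for general $k$ is that $N(v)$ cannot contain a long path, so $e(G[N(v)]) = O(d(v))$, which is far stronger than $O(d(v)^{3/2})$. In any case the bound $e(G[N(v)]) \le C d(v)^{3/2}$ holds with $C = C(k)$, and I expect this verification to be the place requiring the most care about which exact cycle length is forbidden.

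With the hypothesis checked, Theorem \ref{max bisection} gives a bisection of size at least $m/2 + c_1\sum_{v\in V(G)}\sqrt{d(v)} - 2m\sqrt{\Delta/n}$. Now I estimate the two terms. By convexity (Cauchy--Schwarz / power mean), $\sum_v \sqrt{d(v)} \ge n\sqrt{2m/n} \cdot (\text{const})$; more precisely $\sum_v \sqrt{d(v)} \ge \sqrt{n}\,\sqrt{\sum_v d(v)}\cdot$ is the wrong direction, so instead use $\sum_v \sqrt{d(v)} \ge \frac{\sum_v d(v)}{\sqrt{\Delta}} = \frac{2m}{\sqrt{\Delta}}$. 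Hence the guaranteed surplus is at least
\begin{align*}
c_1\frac{2m}{\sqrt{\Delta}} - 2m\sqrt{\frac{\Delta}{n}} = 2m\left(\frac{c_1}{\sqrt{\Delta}} - \sqrt{\frac{\Delta}{n}}\right).
\end{align*}
Using $\Delta \le cn/m^{1/(k+1)}$ we get $\sqrt{\Delta/n} \le \sqrt{c}\, m^{-1/(2k+2)}$, which tends to $0$, while $\Delta \ge d(G) = 2m/n$ and $n \ge$ something like $m^{1/(k+1)}/(100k)$ by the Bondy--Simonovits bound (Theorem \ref{c2k turan}): since $m \le 100 k\, n^{1+1/k}$ we have $n \ge (m/(100k))^{k/(k+1)}$, so $\Delta \le cn/m^{1/(k+1)} \le c \cdot n^{1/(k+1)} \cdot$ (adjust), giving $1/\sqrt{\Delta} \ge$ a power of $m$ that dominates. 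Balancing, one finds the surplus is $\Omega(m/\sqrt{\Delta})$ and, since $\Delta = O(n/m^{1/(k+1)}) = O(m^{(k/(k+1))\cdot ?})$, this is $\Omega(m^{(2k+1)/(2k+2)})$; the exponent arithmetic is: $\Delta \le cn/m^{1/(k+1)}$ combined with $n \le 2m/\delta \le 2m/k$ gives $\Delta = O(m^{k/(k+1)})$, so $m/\sqrt{\Delta} = \Omega(m^{1 - k/(2k+2)}) = \Omega(m^{(2k+2-k)/(2k+2)}) = \Omega(m^{(k+2)/(2k+2)})$, and separately the case-split on whether $\Delta$ is large or small (the standard degenerate argument) pins it to $m^{(2k+1)/(2k+2)}$. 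The main obstacle is getting these exponents to close up: one must carefully combine the lower bound $n \ge (m/(100k))^{k/(k+1)}$ from Theorem \ref{c2k turan}, the upper bound $n \le 2m/k$ from $\delta(G)\ge k$, and the hypothesis on $\Delta$, so that the dominant of $c_1 m/\sqrt{\Delta}$ versus $2m\sqrt{\Delta/n}$ is the former and yields exactly the claimed order; I would organize this as: if $\Delta \le m^{1/(k+1)}$ apply Theorem \ref{max bisection} directly, otherwise use that large $\Delta$ forces $n$ large (via Bondy--Simonovits applied after deleting a max-degree vertex repeatedly, i.e.\ the "standard degenerate argument" alluded to in the lemma statement) to again bound the error term.
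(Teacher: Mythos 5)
Your overall strategy --- verify the sparse-neighbourhood hypothesis of Theorem \ref{max bisection} and then compare the two correction terms --- is exactly the paper's, and your treatment of the error term is correct: the hypothesis gives $2m\sqrt{\Delta/n}\le 2\sqrt{c}\,m^{(2k+1)/(2k+2)}$ directly, which is all that is needed there (provided $c$ is small enough relative to the constant $c_1$ of Theorem \ref{max bisection}). The neighbourhood verification, despite the visible back-and-forth, also lands on the right fact: a path on $2k-1$ vertices inside $N(v)$ closes through $v$ to a $C_{2k}$, so $G[N(v)]$ is path-free of that length and Erd\H{o}s--Gallai gives $e(G[N(v)])\le k\,d(v)\le k\,d(v)^{3/2}$.

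The genuine gap is in the lower bound for $\sum_v\sqrt{d(v)}$. The estimate $\sum_v\sqrt{d(v)}\ge 2m/\sqrt{\Delta}$ only yields $\Omega(m^{(k+2)/(2k+2)})$, as your own exponent arithmetic shows, and $(k+2)/(2k+2)<(2k+1)/(2k+2)$ for every $k\ge2$; the hypothesis $\Delta\le cn/m^{1/(k+1)}$ cannot repair this, since with $n=\Theta(m)$ it still permits $\Delta$ of order $m^{k/(k+1)}\gg m^{1/(k+1)}$. Your proposed fallback (``large $\Delta$ forces $n$ large \dots to again bound the error term'') misidentifies the problem: the error term is already under control, it is the positive term that falls short. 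The paper's fix is a degeneracy argument entirely independent of $\Delta$: by Theorem \ref{c2k turan}, $G$ is $(D-1)$-degenerate with $D=200k\,m^{1/(k+1)}$ (a subgraph of minimum degree $D$ would have $N\le 2m/D<m^{k/(k+1)}$ vertices and hence more than $100kN^{1+1/k}$ edges, a contradiction), so ordering the vertices accordingly and writing $d^+(v)$ for the back-degree gives
\[
\sum_{v}\sqrt{d(v)}\ \ge\ \sum_{v}\frac{d^+(v)}{\sqrt{D}}\ =\ \frac{m}{\sqrt{200k}\,m^{1/(2k+2)}}\ =\ \frac{1}{\sqrt{200k}}\,m^{(2k+1)/(2k+2)}.
\]
You name ``the standard degenerate argument'' but never execute it and assign it the wrong role, so the central quantitative step of the lemma is not actually established in your write-up.
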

\begin{proof}[\bf Proof]
Let $G$ be a $C_{2k}$-free graph with $n$ vertices,  $m$ edges and maximum degree $\Delta$. We may assume that $n$ is sufficiently large. For each $v\in V(G)$, we know that $G[N(v)]$ does not contain a path of length $2k-2$ by the freeness of $C_{2k}$. By the classical Erd\H{o}s-Gallai Theorem, we have $e(N(v))\le kd(v)$. Thus, by Theorem \ref{max bisection}, there is a constant $c_1$ such that $G$ has a bisection of size at least
\begin{align}\label{EQ:bisection-C2k-small-maximum-degree}
m/2+c_1\sum_{v\in V(G)}\sqrt{d(v)}-2m\sqrt{\Delta/n}.
\end{align}

The crux of the proof lies in demonstrating
\begin{align}\label{EQ:degree-sequence-small-maximum-degree}
\sum_{v\in V(G)}\sqrt{d(v)}\ge\frac{1}{\sqrt{200k}}m^{\frac{2k+1}{2k+2}}.
\end{align}
Define $L=200km^{1/(k+1)}$. We claim that  $G$ is $(L-1)$-degenerate. Otherwise,  $G$ contains a subgraph $G'$ with minimum degree at least $L$. Note that the number of vertices of $G'$ is $N \leq \frac{2m}L<m^{k/(k+1)}$. Thus, $L> 200kN^{1/k}$ and then
\[e(G')\ge \frac{LN}{2}> 100kN^{1+1/k},\]
which is impossible by Theorem \ref{c2k turan}.  By the degeneracy of $G$, we can label the vertices of $G$ by $v_1,\ldots,v_n$ so that for every $i$, the number of neighbours $v_j$ of $v_i$ with $j<i$ is strictly smaller than $L$. Let $d^+(v)$ be the number of neighbours $v_j$ of $v_i$ with $j<i$. Then
\begin{align*}
\sum_{v\in V(G)}\sqrt{d(v)}\ge\sum_{v\in V(G)}\sqrt{d^+(v)}=\sum_{v\in V(G)}\frac{d^+(v)}{\sqrt{d^+(v)}}\ge\sum_{v\in V(G)}\frac{d^+(v)}{\sqrt{L}}=\frac{1}{\sqrt{200k}}m^{\frac{2k+1}{2k+2}}.
\end{align*}
Set $c:=\frac{c_1^2}{3200k}$. We conclude that
\begin{align*}
2m\sqrt{\Delta/n}\le \frac{c_1}{2\sqrt{200k}}m^{\frac{2k+1}{2k+2}}\le \frac{c_1}2\sum_{v\in V(G)}\sqrt{d(v)},
\end{align*}
which together with \eqref{EQ:bisection-C2k-small-maximum-degree} and \eqref{EQ:degree-sequence-small-maximum-degree} establishes that
$G$ has a bisection of size at least
\begin{align*}
\frac m2+\frac{c_1}{2\sqrt{200k}}m^{\frac{2k+1}{2k+2}}.
\end{align*}
Thus, we complete the proof of this lemma.
\end{proof}

\begin{proof}[\bf Proof of Theorem \ref{THM:Max-bisection-C2k-free-tight}.]

Let $G$ be a $C_{2k}$-free graph with $n$ vertices, $m$ edges and minimum degree at least $k$. Suppose that $n$ is sufficiently large.  By Theorem \ref{c2k turan},  we have
\begin{align}\label{EQ:bound-average-degree-turan}
m\le100kn^{1+1/k}.
\end{align}
Let $c>0$ be a constant given by  Lemma \ref{max bisection with max degree}, and let
\[
D:=cn/(2m)^{1/(k+1)}.
\]
By Lemma \ref{max bisection with max degree}, we may assume that $\Delta(G)\ge D$. Now, we first show that $G$ is relatively sparse.

\begin{claim}\label{Sparse}
We have $m\le3kn$.
\end{claim}
\begin{proof}
Let $A=\{v\in V(G):d(v)\ge D\}$ and $B=V(G)\setminus A$. Recall that $k\ge 3$. By \eqref{EQ:bound-average-degree-turan}, we have
    \begin{align}\label{EQ:bound-size-A-C2k-tight}
    |A|\le \frac{2m}{D}=\frac{(2m)^{(k+2)/(k+1)}}{cn}=O(n^{2/k})=o(n).
    \end{align}
This together with Theorem \ref{c2k turan} implies that
\begin{align}\label{EQ:bound-number-edge-A-C2k}
e(A)\le100k|A|^{(k+1)/k}=O\left(n^{(2k+2)/k^2}\right)=o(n).
\end{align}
Moreover, we can conclude that for any constant $\epsilon>0$
\begin{align}\label{B-sparse}
e(B)\le\epsilon m.
\end{align}
Otherwise, we may assume that $e(B)\ge\eta m$ for some $\eta>0$. Note that $G[B]$ has maximum degree at most $D$ and $ D\le c|B|/e(B)^{1/(k+1)}$ as $|B|=(1-o(1))n$ by \eqref{EQ:bound-size-A-C2k-tight}. Applying Lemma \ref{max bisection with max degree} to $G[B]$ results in a bisection of $G[B]$ with size at least $e(B)/2+\Omega(e(B)^{(2k+1)/(2k+2)})$. By Lemma \ref{max bisection subgraph}, there is a bisection of $G$ with size at least
        \begin{align*}
        m/2+\Omega\left(e(B)^{(2k+1)/(2k+2)}\right)-\sqrt{8m}= m/2+\Omega\left(m^{(2k+1)/(2k+2)}\right).
        \end{align*}
This implies that $G$ has a large bisection as desired. Thus, we know that \eqref{B-sparse} holds.

Combining \eqref{EQ:bound-number-edge-A-C2k} and \eqref{B-sparse}, we deduce that
 \begin{align}\label{Assume:e(A,B)>=0.99m}
e(A, B)=m-e(A)-e(B)=(1-o(1))m.
\end{align}
Now, we establish an upper bound of  $e(A, B)$ by Theorem \ref{c2k bipartite turan}. If $k$ is even, then $k\ge 4$. This together with \eqref{EQ:bound-size-A-C2k-tight} and Theorem \ref{c2k bipartite turan} yields that
    \begin{align*}
    e(A, B)\le2k\left(|A|^{\frac{k+2}{2k}}n^{\frac12}+n\right)=2kn+O\left(n^{\frac{k^2+2(k+2)}{2k^2}}\right)=(2k+o(1))n.
    \end{align*}
Otherwise, using \eqref{EQ:bound-size-A-C2k-tight} and Theorem \ref{c2k bipartite turan} again for odd $k$ gives that
\begin{align*}
e(A, B)\le2k\left(\left(|A|n\right)^{\frac{k+1}{2k}}+n\right)=2kn+O\left(m^{\frac{k+2}{2k}}\right)=2kn+o(m).
\end{align*}
It follows from \eqref{Assume:e(A,B)>=0.99m} that $m\le 3kn$ in either case. This completes the proof of the claim.
\end{proof}

In what follows, it suffices to show that $G$ contains a large induced subgraph without isolated vertices whose maximum degree is far away from its number of vertices. This together with Theorem \ref{Lee Loh Sudakov JCTB 13} and Lemma \ref{max bisection subgraph} would finish our proof.
Let
\[
S=\left\{v\in V(G):d(v)\ge\left(1-\frac1{2k}\right)n\right\} \text{ and } T=V(G)\setminus S.
\]
Note that $|S|\le k-1$. Since otherwise,  for any $k$-subset $S'\subseteq S$,
    \begin{align*}
       \left |\bigcap_{v\in S'}N(v)\right|
       \ge n-\sum_{v\in S'}\left|\overline{N(v)}\right|
       \ge n-k\left(n-\left(1-\frac1{2k}\right)n\right)=\frac n2.
    \end{align*}
This means that  $G$ contains a copy of $K_{k,n/2}$, contradicting the freeness of $C_{2k}$.  Recall that the minimum degree of $G$ is at least $k$. This together with the fact $|S|\le k-1$ implies that
    \begin{align*}
    d_{G[T]}(v)  \ge d(v)-|S|\ge 1
    \end{align*}
for each $v\in T$. Thus, $G[T]$ has maximum degree at most $(1-\frac1{2k})n$ and minimum degree at least one. It follows from Theorem \ref{Lee Loh Sudakov JCTB 13} that $G[T]$ has a bisection of size at least $e(T)/2+\Omega(|T|)$. Consequently, by Lemma \ref{max bisection subgraph}, $G$ has a bisection of size at least
    \begin{align*}
    m/2+\Omega(|T|)-\sqrt{8m}\ge m/2+\Omega(m),
    \end{align*}
    where the last inequality follows from Claim \ref{Sparse} and the fact $|T|=n-|S|=\Omega(n)$. Thus, we obtain a much larger cut than expected, and complete the proof of Theorem \ref{THM:Max-bisection-C2k-free-tight}.
\end{proof}
\section{Bisections of $C_4$-free graphs}\label{Bisections of C_4-free graphs}
In this section, we prove Theorem \ref{max bisection for c2kfree} for $C_4$-free graphs as follows.

\begin{theorem}\label{max bisection for c4free}
Every $C_{4}$-free graph with $m$ edges and minimum degree at least $2$ has a bisection of size at least $m/2+\Omega(m^{5/6}).$
\end{theorem}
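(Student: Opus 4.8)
The plan is to mirror the structure of the proof of Theorem \ref{THM:Max-bisection-C2k-free-tight} but with the sharper tools available for $C_4$-free graphs, namely the K\H ov\'ari--S\'os--Tur\'an bound (Theorem \ref{KST Theorem}) and the Hou--Yan bound (Theorem \ref{Fan Hou Yu CPC18}) replacing the Bondy--Simonovits bound and the Lee--Loh--Sudakov bound respectively. First I would record that a $C_4$-free graph on $n$ vertices has $m = O(n^{3/2})$, equivalently $n = \Omega(m^{2/3})$, and that for every $v$ the neighbourhood $N(v)$ is an independent set (no $C_4$ means no two common neighbours), so $e(G[N(v)]) = 0 \le C d(v)^{3/2}$. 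Thus Theorem \ref{max bisection} applies with any $C$, giving a bisection of size at least $m/2 + c_1 \sum_v \sqrt{d(v)} - 2m\sqrt{\Delta/n}$. The first order of business, exactly as in Lemma \ref{max bisection with max degree}, is to handle the case of small maximum degree: if $\Delta \le c\, n/m^{1/3}$ for a suitable small $c$, then a degeneracy argument (a $C_4$-free graph is $O(m^{1/3})$-degenerate, since a subgraph with minimum degree $D$ on $N$ vertices has $N \le 2m/D$ and $\binom{N}{2} \ge e \ge DN/2$ forces $D = O(\sqrt N) = O(m^{1/3})$) yields $\sum_v \sqrt{d(v)} \ge \sum_v d^+(v)/\sqrt D = \Omega(m/m^{1/6}) = \Omega(m^{5/6})$, while the error term $2m\sqrt{\Delta/n} = O(m^{5/6})$ with a small enough constant is absorbed. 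So the $C_4$-analogue of Lemma \ref{max bisection with max degree} gives the theorem whenever $\Delta \le c\,n/m^{1/3}$.

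Next I would assume $\Delta \ge D := \tfrac{c}{2}(n/d)^{2/3}$ where $d = 2m/n = O(n^{1/2})$ is the average degree, and split $V(G)$ into the high-degree set $A = \{v : d(v) \ge D\}$ and $B = V(G)\setminus A$. The key estimates are $|A| \le 2m/D = O(n^{1/3} d^{4/3}) = o(n)$ and, by Theorem \ref{KST Theorem}, $e(A) = O(|A|^{3/2}) = o(n)$ (using $d = O(n^{1/2})$ to check the exponents are below $1$). As in the even-cycle proof, I may assume $e(B) = o(m)$: if $e(B) \ge \eta m$ then $D \le c|B|/e(B)^{1/3}$, so the small-maximum-degree lemma applied to $G[B]$ gives a bisection of $G[B]$ of size $e(B)/2 + \Omega(e(B)^{5/6})$, and Lemma \ref{max bisection subgraph} lifts this to a bisection of $G$ of size $m/2 + \Omega(m^{5/6})$. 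Hence $e(A,B) = m - e(A) - e(B) \ge 0.99 m$. On the other hand $e(A,B)$ is the number of edges of a $C_4$-free bipartite graph with parts of sizes $|A| = o(n)$ and at most $n$; since $C_4 = C_{2\cdot 2}$ and $k=2$ is even, Theorem \ref{c2k bipartite turan} gives $e(A,B) \le 4(|A|^{1}\,n^{1/2} + |A| + n) = (4+o(1))n$, using $|A|^{(k+2)/(2k)} = |A|$. Combining, $0.99m \le (4+o(1))n$, so $m \le 5n$, i.e. $G$ has bounded average degree.

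Finally, with $m = O(n)$ in hand I would pass to $G[T]$ where $S = \{v : d(v) \ge (1 - \tfrac14)n\} = \{v : d(v) \ge \tfrac34 n\}$ and $T = V(G)\setminus S$. The $C_4$-freeness forces $|S| \le 1$: two vertices of degree $\ge \tfrac34 n$ would have at least $\tfrac n2$ common neighbours, giving a $K_{2,n/2}$ and hence a $C_4$. Since $\delta(G) \ge 2$, every $v \in T$ has $d_{G[T]}(v) \ge d(v) - |S| \ge 1$, and $\Delta(G[T]) \le \tfrac34 n \le (1-\tfrac13)|T| + O(1)$ once $|T| \ge n - 1$; more simply $G[T]$ is $C_4$-free with minimum degree at least $1$, and one checks it is connected or else works component by component. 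Actually the cleanest route here is to invoke Theorem \ref{Fan Hou Yu CPC18} (or Theorem \ref{Lee Loh Sudakov JCTB 13}) on $G[T]$: since $\delta(G[T]) \ge 1$ and $\Delta(G[T]) - 1 < |T|$, Theorem \ref{Lee Loh Sudakov JCTB 13} gives a bisection of $G[T]$ of size at least $e(T)/2 + (|T| - \max\{|T|/3, \Delta(G[T])-1\})/4 = e(T)/2 + \Omega(|T|) = e(T)/2 + \Omega(n)$, using $\Delta(G[T]) \le \tfrac34 n$ and $|T| \ge n-1$. Then Lemma \ref{max bisection subgraph} upgrades this to a bisection of $G$ of size at least $m/2 + \Omega(n) - 2\sqrt m = m/2 + \Omega(m)$, since $m = O(n)$. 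This is far stronger than $\Omega(m^{5/6})$, so the proof concludes.

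The main obstacle is the bookkeeping in the intermediate regime: verifying that every error term ($e(A)$, $e(B)$, $|A|$, and the $2m\sqrt{\Delta/n}$ term from Theorem \ref{max bisection}) is genuinely negligible requires combining the correct Tur\'an-type exponents with the bound $d = O(n^{1/2})$, and the exponent arithmetic is tighter than in the $k \ge 3$ case because $C_4$ is the extremal case for which $m$ can be as large as $\Theta(n^{3/2})$ — one must confirm that $|A|^{3/2}$ and the bipartite Tur\'an bound still beat the relevant targets. A secondary point to handle carefully is the case analysis on whether $G$ or $G[B]$ or $G[T]$ is connected, since Theorems \ref{Fan Hou Yu CPC18} and the degeneracy argument in the small-degree lemma are stated for connected graphs or need to be applied componentwise; splitting into components and noting that a bisection bound of the form $m/2 + \Omega(m^{5/6})$ is subadditive in the obvious way (via Lemma \ref{max bisection subgraph} or by a direct merging argument) resolves this.
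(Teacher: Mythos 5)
Your outline coincides with the paper's strategy only in the \emph{sparse} regime, and it has a genuine gap in the dense one. Set $x=n^{3/2}/m$. When $x$ is at least a large constant, your plan is essentially the paper's Theorem \ref{max bisection for c4free sparse}: the $A/B$ decomposition with threshold $D=\tfrac c2(n/d)^{2/3}$, the KST and Naor--Verstra\"ete bounds on $e(A)$ and $e(A,B)$, and Theorem \ref{Fan Hou Yu CPC18} for the very sparse case all go through. But when $m=\Theta(n^{3/2})$ (equivalently $d=\Theta(n^{1/2})$, i.e.\ $x=O(1)$), the step ``$|A|\le 2m/D=O(n^{1/3}d^{4/3})=o(n)$'' fails: first, $2m/D=nd/D=\Theta(n^{1/3}d^{5/3})$ (your exponent $4/3$ is off), and with $d=\Theta(n^{1/2})$ this is $\Theta(n^{7/6})$, a vacuous bound. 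Concretely, in an extremal $C_4$-free graph such as the Erd\H{o}s--R\'enyi polarity graph every vertex has degree $\Theta(\sqrt n)\gg D=\Theta(n^{1/3})$, so $A=V(G)$, $B=\emptyset$, $e(A)=m$, and the chain ``$e(A)=o(n)$, $e(A,B)\ge 0.99m$, hence $m\le 5n$'' collapses entirely. This is exactly why the paper's Section 4 argument is restricted to $k\ge3$: there $|A|=O(n^{2/k})=o(n)$, whereas for $k=2$ the corresponding exponent is $7/6>1$. Flagging this as ``bookkeeping'' understates it; the exponent arithmetic does not work out, and no choice of constants rescues it.

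The missing ingredient is a separate treatment of the dense case $x\le\Lambda$, which the paper supplies by iteration: repeatedly delete a maximum-degree vertex, observe that $X_i=m_i/(n-i)^{3/2}$ is strictly decreasing (since $\Delta_i\ge 2m_i/(n-i)$) and satisfies $X_i<\tfrac1x\sqrt{(n-i)/n}$, stop at the first $N$ with $X_N\le\tfrac1{2\Lambda}$, check that the remaining graph $G_N$ still has $\Omega(m)$ edges, apply the sparse theorem to $G_N$, and lift the bisection back to $G$ via Lemma \ref{max bisection subgraph}. (Note the sparse theorem's second alternative deliberately drops the minimum-degree hypothesis, since $G_N$ need not inherit $\delta\ge2$; the hypothesis $\delta\ge2$ is only used in the regime $x\ge\lambda n^{3/8}$ via Theorem \ref{Fan Hou Yu CPC18}.) Your final $S/T$ step for the case $m=O(n)$ is fine as far as it goes, but without the peeling argument you never legitimately reach the conclusion $m=O(n)$.
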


Following the strategy in the proof of Theorem \ref{THM:Max-bisection-C2k-free-tight}, we first prove the following theorem.
\begin{theorem}\label{max bisection for c4free sparse}
There exists an absolute positive constant $\ell$ such that the following holds. Let $G$ be a $C_{4}$-free graph with $n$ vertices and $m$ edges.  Suppose further that $n^{6/5}\le m\le\ell n^{3/2}$. Then $G$ has a bisection of size at least $m/2+\Omega(m^{5/6})$.
\end{theorem}
\begin{proof}[\bf Proof]
Let $G$ be a $C_{4}$-free graph with $n$ vertices and $m$ edges. We may assume that $n$ is sufficiently large and the constant $c$ from Lemma \ref{max bisection with max degree} is small enough (say $c<1$). Set $\ell :=\left(\frac{c}{200}\right)^3$ and define
\[
D:=\frac{cn}{(2m)^{1/3}}.
\]
Clearly,   we may assume that $\Delta(G)\ge D$ by Lemma \ref{max bisection with max degree}. In what follows, we aim to find a large induced subgraph of $G$ with $\Omega(m)$ edges.

Let $A=\{v\in V(G):d(v)\ge D\}$ and $B=V(G)\setminus A$. Since $G$ is $C_4$-free, it follows from Theorems \ref{c2k turan} and \ref{c2k bipartite turan} that
\begin{align}\label{AB}
e(A)\le200|A|^{3/2} \;\text{and}\; e(A,B)\le4\left(|A|\sqrt n+n\right).
\end{align}
Note that $n^{6/5}\le m\le\ell n^{3/2}$. This together with the definitions of $D$ and $\ell $ yields that
\[
D=\frac{cn}{(2m)^{1/3}}\ge\frac{cn}{(2\ell  n^{3/2})^{1/3}}\ge100\sqrt n
\]
and
\[
\frac{2m}{D}=\frac{(2m)^{4/3}}{cn}\ge\frac{(2n^{6/5})^{4/3}}{cn}=\Omega(n^{3/5}).
\]
Thus, we have
\[
|A|\le\frac{2m}{D}\le\frac{2\ell  n^{3/2}}{100\sqrt n}<\frac{n}{100} \;\text{and}\; \sqrt n=o\left(\frac{2m}{D}\right).
\]
This together with \eqref{AB} deduce that
\begin{align*}
e(A)+e(A,B)
& \le\sqrt n\left(20|A|+4|A|+4\sqrt n\right)\\
& \le\sqrt n\left(24\cdot\frac{2m}{D}+o\left(\frac{2m}{D}\right)\right)\\
& \le\frac m2.
\end{align*}
Now, we conclude that $G[B]$ has at least $0.99n$ vertices and $m/2$ edges. Note also that $G[B]$ has maximum degree at most $D$ and $ D\le c|B|/e(B)^{1/3}$ as $|B|\ge 0.99n$. Applying Lemma \ref{max bisection with max degree} to $G[B]$ results in a bisection of $G[B]$ with size at least $e(B)/2+\Omega(e(B)^{5/6})$. By Lemma \ref{max bisection subgraph}, there is a bisection of $G$ with size at least
        \begin{align*}
        m/2+\Omega\left(e(B)^{5/6}\right)-\sqrt{8m}= m/2+\Omega\left(m^{5/6}\right).
        \end{align*}
This implies that $G$ has a large bisection as desired, and completes the proof of this theorem.
\end{proof}

\begin{proof}[\bf Proof of Theorem \ref{max bisection for c4free}]
Let $G$ be a $C_{4}$-free graph with $n$ vertices, $m$ edges and minimum degree at least $2$. Suppose that $n$ is sufficiently large. Let $\ell $ be the constant from Theorem \ref{max bisection for c4free sparse}. If $m\le n^{6/5}$, then, by Theorem \ref{Fan Hou Yu CPC18}, $G$ has a bisection of size at least
    \begin{align*}
    \frac m2 + \frac{n - 1}4\ge \frac m2+\Omega(m^{5/6}),
    \end{align*}
indicating the desired result. If $n^{6/5}\le m\le\ell n^{3/2}$, then we are done by Theorem \ref{max bisection for c4free sparse}. In what follows, we may assume that  $m\ge\ell n^{3/2}$, and we aim to find an induced subgraph $G'$ of $G$ with $\Omega(m)=e(G')\le \ell |V(G')|^{3/2}$.

Define $G_0=G$ and let $G_i$ be the graph obtained by removing a vertex with its degree $\Delta(G_{i-1})$ from $G_{i-1}$  for each $i\in\{1,\ldots,n-1\}$. For simplicity, let
    \begin{align*}
        m_i=e(G_i),\Delta_i=\Delta(G_i)\text{ and }X_i=\frac{e(G_i)}{|V(G_i)|^{3/2}}=\frac{m_i}{(n-i)^{3/2}}.
    \end{align*}
In particular, write
    \begin{align*}
        X_0:=\frac{m}{n^{3/2}}.
    \end{align*}
    Clearly, we have $\ell \le X_0\le200$ by Theorem \ref{c2k turan} and our assumption. Note also that $m_i=m_{i-1}-\Delta_{i-1}$ for each $i\in\{1,\ldots,n-1\}$.
It follows from $\Delta_i\ge2e(G_i)/|V(G_i)|=2m_i/(n-i)$ that
    \begin{align*}
    m_i\le m_{i-1}-\frac{2m_{i-1}}{n-i+1}=\frac{n-i-1}{n-i+1}m_{i-1}.
    \end{align*}
Consequently, for each $i\in\{1,\ldots,n-1\}$
    \begin{align*}
    X_i=\frac{m_i}{(n-i)^{3/2}}&\le\frac{n-i-1}{n-i+1}\cdot\frac{m_{i-1}}{(n-i)^{3/2}}\\
    &=\frac{n-i-1}{n-i}\cdot\sqrt{\frac{n-i+1}{n-i}}\, X_{i-1}\\
    &\le\sqrt{\frac{n-i}{n-i+1}}\,X_{i-1},
    \end{align*}
implying that $\{X_i\}_{0\le i\le n-1}$ is a strictly decreasing sequence. Moreover, we have
  \begin{align}\label{EQ:upper-bound-Xi-C4-free}
    X_i\le\sqrt{\frac{n-i}{n-i+1}}\,X_{i-1}\le\sqrt{\frac{n-i}{n-i+1}\frac{n-i+1}{n-i+2}\cdots\frac{n-1}{n}}\,X_0=\sqrt{\frac{n-i}{n}}\,X_0.
    \end{align}
Set
\[
L:=\left\lceil\left(1-\left(\frac{\ell }{400}\right)^2\right)n\right\rceil.
\]
This together with \eqref{EQ:upper-bound-Xi-C4-free} and $X_0\le200$ implies that
\[
X_L\le\sqrt{\frac{n-L}{n}}\,X_0\le\frac{\ell }{2}.
\]
Since $\{X_i\}_{0\le i\le n-1}$ is a strictly decreasing sequence with $X_0\ge\ell >0=X_{n-1}$, we can pick a minimal integer $N\in(0,n-1)$ such that $X_{N}\le\ell /2$. This further means that $N\le L$ and $X_{N-1}>\ell /2$. By the definitions of $G_i$ and $X_i$, we have
    \begin{align*}
        \frac{\ell }{2}(n-N+1)^{3/2}-n< e(G_{N-1})-n\le e(G_N)\le \frac{\ell }{2}(n-N)^{3/2}.
    \end{align*}
Consequently,
    \begin{align*}
    e(G_N)=\left(\frac{\ell }{2}+o(1)\right)(n-N)^{3/2}\ge\left(\frac{\ell }{2}+o(1)\right)(n-L)^{3/2}\ge\left(\left(\frac{\ell }{400}\right)^4+o(1)\right)m,
    \end{align*}
    where the last inequality follows from the definition of $L$ and the fact $m\le200 n^{3/2}$.
Applying Theorem \ref{max bisection for c4free sparse} to $G_N$ results in  a bisection of  size at least
$$e(G_N)/2+\Omega(e(G_N)^{5/6}).$$
Thus, by Lemma \ref{max bisection subgraph}, $G$ has a bisection of size at least
$$m/2+\Omega(e(G_N)^{5/6})-\sqrt{8m}= m/2+\Omega(m^{5/6}),$$
completing the proof of Theorem \ref{max bisection for c4free}.
\end{proof}

We immediately obtain Theorem \ref{max bisection for c2kfree} by combining Theorems \ref{THM:Max-bisection-C2k-free-tight} and \ref{max bisection for c4free}. The tightness of Theorem \ref{max bisection for c2kfree} is from Theorem \ref{THM:AKS-thm}, where for each $k\in\{2,3,5\}$ there exists a $C_{2k}$-free graph $G$ with $m$ edges such that every bipartite subgraph of $G$ has size at most $m/2+O(m^{(2k+1)/(2k+2)})$.

\section{Concluding remarks}

The main contribution of this paper is a bisection version of the Alon-Krivelevich-Sudakov Theorem. Our method relies on semidefinite programming techniques, and appears to be worthy of future exploration.

The Shearer's bound serves as an essential tool for establishing rigorous bounds on Max-Bisections of $H$-free graphs \cite{Lin2021, HWY2024}. The known sufficient conditions for graphs to possess bisections meeting Shearer's bound universally require the exclusion of $C_4$ together with other specific graphs. A compelling open problem is to establish the Shearer-type bound for Max-Bisections for $C_{2k}$-free graphs with large minimum degree, especially for $C_4$-free graphs. On the other hand, Theorem \ref{max bisection} requires refinement, as the third term in \eqref{THM:bisection-version-shear-bound} becomes dominant when $\Delta$ is large. This behavior fundamentally arises from the strict part size constraints inherent in the bisection problem, where both partitions must contain exactly $\lfloor n/2\rfloor$ or $\lceil n/2 \rceil$  vertices. However, the third term in \eqref{THM:bisection-version-shear-bound} is essential and cannot be eliminated by considering bisections of $K_{k,n}$.

A natural extension would be to generalize Theorem \ref{max bisection for c2kfree} to forbidden complete bipartite  subgraphs. For $t\ge s\ge 2$, an accessible result, given by Hou and Wu \cite{Hou2021}, is the existence of a bisection of size at least $m/2+\Omega(n)$ in $K_{s,t}$-free graphs with $n$ vertices, $m$ edges and minimum degree $s$.

\end{document}